\documentclass[10.5pt]{article}
\usepackage{mathtools}
\usepackage[font={small,it}]{caption}

\usepackage{hyperref}
\usepackage{amsmath,amssymb,amstext,amsfonts}
\usepackage{enumerate}
\usepackage{algorithm}
\usepackage{algorithmic}
\usepackage{relsize}
\usepackage{amsthm}
\newcounter{mycounter}
\setcounter{mycounter}{4}
\usepackage[all,arc]{xy}
\usepackage{mathrsfs}
\usepackage{tcolorbox}
\usepackage{graphicx}
\usepackage{subcaption}
\usepackage{float}
\usepackage[margin=0.9 in]{geometry}
\linespread{1.1}
\usepackage{listings}
\usepackage{xcolor}
\usepackage{subcaption}

\lstset { %
    language=C++,
    backgroundcolor=\color{black!5}, 
    basicstyle=\footnotesize,
}

\usepackage{graphicx}
\usepackage[utf8]{inputenc}
\newtheorem{theorem}{Theorem}[section]
\newtheorem{lemma}[theorem]{Lemma}
\newtheorem{proposition}[theorem]{Proposition}

\newtheorem{assumption}{Assumption}
\theoremstyle{definition}

\usepackage{systeme}
\usepackage{comment}
\usepackage{dsfont}
\newtheorem*{remark}{Remark}

\tcbuselibrary{theorems}
\theoremstyle{definition}

\usepackage{sectsty}

\newcounter{texercise}
\newwrite\solout
\def\openoutsol{\immediate\openout\solout\jobname.sol}  \def\writesol#1{\immediate\write\solout{\noexpand\processsol{\thetexercise}{#1}}}

\newcounter{mytheorem}[section] 
\tcbset{
theostyle/.style={fonttitle=\bfseries\upshape, fontupper=\slshape,
arc=0mm, colback=blue!5,colframe=blue!75!black}, defstyle/.style={fonttitle=\bfseries\upshape, fontupper=\slshape,
colback=red!10,colframe=red!75!black},
}

\newcommand{\xu}{X^{u}}

\newcommand{\xuv}{\Delta X^{u,v}}

\newcommand{\zu}{Z^u}
\newcommand{\zv}{Z^v}
\newcommand{\zuv}{\Delta Z^{u,v}}
\newcommand{\yuv}{\Delta Y^{u,v}}
\newcommand{\iuv}{I^{u,v}}




\newcommand{\bR}{\mathbb{R}}



\newcommand{\E}{\mathbb{E}}

\newcommand{\dxu}{DX^u_{t}}

\numberwithin{equation}{section}
\bibliographystyle{plain}


\begin{document}
\title{Batch Sample-wise Stochastic Optimal Control via Stochastic Maximum Principle}
\date{}
\author{
Hui Sun  \thanks{Corresponding author. Department of Financial and Actuarial Mathematics, 
School of Mathematics \& Physics. Xi’an Jiaotong-Liverpool University, Suzhou 215123, P.R.China
Email: Hui.Sun@xjtlu.edu.cn.}\ , Feng Bao \thanks{Department of Mathematics, Florida State University, FL 32306, USA. Email:fbao@fsu.edu}}
\date{This Version: \today}
\maketitle

\begin{abstract}
	In this work, we study the stochastic optimal control problem (SOC) mainly from the probabilistic view point, i.e. via the Stochastic Maximum principle (SMP) \cite{Peng4}. We adopt the sample-wise backpropagation scheme proposed in \cite{Hui1} to solve the SOC problem under the strong convexity assumption. Importantly, in the Stochastic Gradient Descent (SGD) procedure, we use batch samples with higher order scheme in the forward SDE to improve the convergence rate in \cite{Hui1} from $\sim \mathcal{O}(\sqrt{\frac{N}{K} + \frac{1}{N}})$ to $\sim \mathcal{O}(\sqrt{\frac{1}{K} + \frac{1}{N^2}})$ and note that the main source of uncertainty originates from the scheme for the simulation of $Z$ term in the BSDE. In the meantime, we note the SGD procedure uses only the necessary condition of the SMP, while the batch simulation of the approximating solution of BSDEs allows one to obtain a more accurate estimate of the control $u$ that minimizes the Hamiltonian. We then propose a damped contraction algorithm to solve the SOC problem whose proof of convergence for a special case is attained under some appropriate assumption. We then show numerical results to check the first order convergence rate of the projection algorithm and analyze the convergence behavior of the damped contraction algorithm. We note that comparisons between the numerical results from these two algorithms (projection and contraction) show the projection approach is still favored in terms of stability and efficiency. Lastly, we briefly discuss how to incorporate the proposed scheme in solving practical problems especially when the Randomized Neural Networks are used. We note that in this special case, the error backward propagation can be avoided and parameter update can be achieved via purely algebraic computation (vector algebra) which will potentially improve the efficiency of the whole training procedure. Such idea will require further exploration and we will leave it as our future work. 
	
\vspace{1em}
\noindent \textbf{Keywords:} Stochastic Maximum Principle, batch gradient descent, contraction with damping, convergence analysis. Neural networks. 

\vspace{1em}
\noindent \textbf{MSC codes:} 60H10, 60H30, 65C20.  
\end{abstract}

\section{Introduction}
In recent years, stochastic optimal control problems have been studied extensively especially with the dawning of machine learning/deep learning assisted mathematical approaches. A large body of both theoretical \cite{Bensoussan1_T, Yong1, Peng1, Haussmann, pham1} and applied numerical aspect of the subject \cite{Zhao1, Du1, Peng3, Hanson, Hui2, Kushner} can be found. Recently, an extensive amount of work has been dedicated in leveraging the deep learning techniques to solve the control problems in a supervised learning framework for both the classical and the mean-field settings \cite{Jiequn1, pham2, pham3, Carmona2, Carmona3, Carmona4, Ruimeng1}. All those frameworks rely on directly approximating the control with a deep neural network whose trainable weights are then updated iteratively by a general gradient descent type method via minimizing the expected loss. The current work instead focuses on using one of the more classical probabilistic approach in solving the stochastic optimal control problem, namely the stochastic maximum principle. In this approach, one will first find the numerical approximation of the variational derivative $J'_u(u)$ of the loss and then update the control via gradient descent method. The benefit of such approach is that when one works with a deterministic control under some strong convexity like condition, and when $J'_u(u)$ is appropriately approximated, the first order error convergence can be shown \cite{Zhao1}. The downside however, is that a good approximation of $J'_u(u)$ requires solving a system of FBSDE at each gradient descent iteration which is time consuming. More specifically, finding the numerical approximation $J^{',N}_u(u)$ typically requires solving the following FBSDEs (discretized): 
\begin{align}\label{system_og_num}
\begin{cases}
X^{N}_n&=b(t_n, X^N_n,u_{t_{n}}) h + \sigma(t_n, X^N_n,u_{t_{n}}) \Delta W_n\\ 
Y^N_{t_n}&=\E_{t_n}[Y^N_{t_{n+1}}]+h\E_{t_n}[b'_x(t_n, X_n,u_{t_{n}})Y^N_{{t_{n+1}}}+\sigma'_x(t_n, X_n,u_{t_{n}})Z^N_{t_n}+f'_x(t_n, X_n,u_{t_{n}})]  \\
Z^N_{t_n}&=\E_{t_n}[Y^N_{t_{n+1}}\Delta W_{t_{n}}]/h.
\end{cases}
\end{align}
Common methods used in solving the system of such FBSDE involves i) Monte Carlo methods ii) using regressors to approximate $(Y^N_{t_n},Z^N_{t_n})$. iii) Find $(Y^N_{t_n},Z^N_{t_n})$ on preselected mesh-points and then use interpolation to find approximation for the entire target function. iv) use deep learning to find approximation for both $(Y^N_n,Z^N_n)$ and solve them backward iteratively. However all of these approaches requires finding the solution of the BSDEs iteratively for each projection iteration. And some of the methods are not very effective when the underlying dimension of the control of state is high. 

On the other hand, it is noted in \cite{feng1} and \cite{Hui1} that the goal of finding the optimal control is not about solving the BSDEs but to find an effective approximation of the projection $J'_u(u)$. As such, a sample-wise approximation of the \eqref{system_og_num} is proposed:
\begin{align}
\begin{cases}
X_{n+1} =X_n + b(t_n,X_n, u_n) h + \sigma(t_n, u_n)\Delta W_n  \\ 
Y_n =Y_{n+1}+h \big( b'_x(t_n,X_n, u_n) Y_{n+1} + \sigma'_x(t_n,X_n, u_n) Z_{n}+ f'_x(t_n,X_n, u_n) \big), \ \ Y_T = g_x(X_T)   \\
Z_n = \frac{Y_{n+1} \Delta W_n}{h} 
\end{cases}
\end{align}
and the original gradient (under numerical scheme) $J^{',N}_u(u)|_{t=t_n}= \E[b'_u(t_n,X_t, u_t) Y^{N,u}_{t_n} + \sigma'_u(t_n,X^{N,u}_{t_n}, u_{t_n}) Z^{N,u}_{t_n} + f'_u(t_n,X^{N,u}_{t_n}, u_{t_n})]$ is approximated by the sample gradient $j'_u(u) = b'_u(t,X_n, u_{n}) Y^u_n + \sigma'_u(t,X_n, u_n) Z^u_n + f'_u(t,X_n, u_n)$. This sample-wise approach together with stochastic gradient descent as the update procedure is shown to be very effective. A convergence proof under the strong convexity assumption is also given in \cite{Hui1} and the rate of convergence is found to be $\sim \mathcal{O}(\sqrt{\frac{N}{K} + \frac{1}{N}})$.

We still adopt the sample-wise backward simulation in this work and we seek to improve the efficiency of the algorithm by using batch sampling and provide rigorous proofs for the rate of convergence. We will also explore a new optimization approach based on batch sampling and study its effectiveness.  The main results of the paper are as follows: 
\begin{enumerate}
    \item we adopt batch samples with higher order scheme in the forward SDE to improve the convergence rate in \cite{Hui1} from $\sim \mathcal{O}(\sqrt{\frac{N}{K} + \frac{1}{N}} )$ to $\sim \mathcal{O}(\sqrt{\frac{1}{K} + \frac{1}{N^2}})$ and note that the main source of uncertainty originates from the scheme for the simulation of $Z$ term in the BSDE. Thus, the batch simulation in the BSDE and the SGD procedure significantly reduces the noise in the original estimator $j'_u(u)$. We also note that the first order convergence of the control is the optimal rate given the Euler type approximation of the true control. 
    \item In the meantime, we note the SGD procedure uses only the necessary condition of the SMP, while the batch simulation of the approximating solution of BSDEs allows one to obtain a more accurate estimate of the control $u$ that minimizes the Hamiltonian. We then propose a damped contraction algorithm to solve the SOC problem whose proof of convergence is attained under some appropriate assumption. We then show numerical results to check the first order convergence rate of the projection algorithm and convergence behavior of the damped contraction algorithm. We comment that the convergence analysis for the contraction approach, though needs to be further refined, still sheds some light about how to chose the batch size $M$ against $N$
    
    We note that comparisons between the numerical results from these two new algorithms (batch SGD projection and contraction) and the original SGD shows that the two new approaches based on batch sampling is much more time-efficient and the batch SGD approach stands out to be most efficient and whose experimental results are more interpretable.
    
    \item Lastly, we discussed and showed how to incorporate the proposed scheme in training neural network especially when the Randomized Neural Networks are used. We note that in this special case, the backward propagation can be avoided and parameter update can be achieved via purely algebraic computation (vector algebra) which will potentially improve the efficiency of the whole training procedure. Such idea will require further exploration and we will leave it as our future work. In this regard, this paper also lays out some theoretical foundation for the design of neural network structure and the understanding of training procedure. 
\end{enumerate}

The rest of the paper is structured as follows: the general stochastic optimal control problems and the stochastic maximum principle framework are discussed in Section 2; In Section 3, we narrow the scope down by considering only the time deterministic control problems and state the two main algorithms in \ref{algorithm_batch_sample}. In Section 4, we state additional assumptions and provide proofs for the algorithms. In particular, we show that under the current method, the rate of convergence for the projection algorithm is improved to $\sim \mathcal{O}(\sqrt{\frac{1}{K}+\frac{1}{N^2}})$ which shows the benefit of using the batch samples for gradient update. In Section 5, we provide numerical examples to show convergence behavior of the algorithm. We finally conclude the paper with plan for future work. Sample code can be found on \url{https://github.com/Huisun317/Batch_SMP}.

\section{Review of SOC Problem and the SMP.}
Consider the following controlled stochastic process
\begin{align}
	d X_t=b(t, X_t, u_t) dt + \sigma(t,x_t, u_t) dW_t, \ \  X_{t_0}=x_0
\end{align}
where we have $X_t \in \bR^d, W_t \in \bR^m, b(\cdot,\cdot,\cdot) \in \bR^d,  \sigma(\cdot,\cdot) \in \bR^{d \times m}$, and $\mathcal{U}$ is the collection of all controls $u_t \in \bR^k$ such that 
\begin{align}
	\E[\int^T_0 |b(t, 0, u_t)|^2 +|\sigma(t, 0, u_t)|^2 dt]  < \infty.
\end{align}

To ease notations, we sometimes suppress the time dependencies in the coefficients $b,\sigma, f$. We make the following assumptions: 
\begin{assumption}\label{assumption 1}
  Let $b, \sigma, f, g$ be deterministic smooth functions in their variables with the derivatives $\partial_x b \in \bR^{d\times d},\partial_{u} b \in \bR^{d\times k}, \partial_x \sigma \in \bR^{d\times m \times d}, \partial_{u} \sigma \in \bR^{d \times m \times k}$ all bounded such that :
   \begin{enumerate}[i.]
       \item The lipschitz condition hold uniformly in $t$:  $$|\phi(x,u)-\phi(y,v)| \leq C ( |x-y| + |u-v|).$$
     where $\phi:=b,b'_x,b'_u, \sigma, \sigma'_x, \sigma'_u$.  We also assume that $|b(0,u)|+|\sigma(0,u)| \leq C$ for some $C>0$.

       \item The terminal function $g$ is bounded from below and it has at most quadratic growth. 
       \begin{align}
       	|g(x)| \leq C(1+|x|^2).
       \end{align}
   \end{enumerate}
\end{assumption}
Typically, the control $\mathcal{U}(t_0, x_0)$ contains all the controls evaluated in $U \subset \bR^k$ such that 
       \begin{align}
       	\E[\int^T_{t_0} f(t, X_{t}, u_t) dt] < \infty
       \end{align}
   and we assume that the set $\mathcal{U}(t_0,x_0)$ is not empty for all $(t_0, x_0) \in [0, T] \times \bR^d$. 

Abusing notation by still writing the space of admissible controls as $\mathcal{U}$, we state the stochastic optimal control problem as follows: 
\begin{align}
\inf_{u \in \mathcal{U}} J(u):=\E[\int^T_{t_0} f(t,X_t, u_t) dt + g(X_T)]
\end{align}

\subsection{Stochastic Maximum Principle}
The stochastic optimal control can be classically solved by first using the dynamic programming principle and then the related value function can be attained by solving the derived HJB equation. 
In this section however, we take the probabilistic approach and solve the stochastic optimal control problem by using the Pontryagin's Stochastic Maximum principle. Let $b \in \bR^d, z \in \bR^{d \times m}$ and define the \textit{Hamiltonian} to be: 
\begin{align}
	H(t,x,y,z,u):= b y +\sigma z + f 
\end{align}
where we make the following comments on the Hamiltonian 
\begin{enumerate}[i).]
	\item $y, z$ are the co-variables
	\item The multiplication is understood to be the following
	\begin{enumerate}
	\item $b y := b^T y$
	\item $\sigma z := tr(\sigma^T z)=tr(z^T \sigma) $
	\end{enumerate}
\end{enumerate}

Assuming convex control domain and consider $u^{\epsilon}:= u+\epsilon \beta, \beta:= v-u$ and we consider the following SDE
\begin{align}
	d DX^u_{t}= \bigg(b'_x(t,X^u_t, u_t) \dxu  + b'_u(t,X^u_t, u_t) \beta_t \bigg) dt + \bigg(\sigma'_{x}(t,X^u_t, u_t) \dxu  + \sigma'_u(t,X^u_t, u_t) \beta_t \bigg) dW_t \ DX^u_{0}=0  \nonumber
\end{align}
we note that this SDE is motivated by considering the following derivative: 
\begin{align}
	\lim_{\epsilon \rightarrow 0 }\frac{X_t^{u+\epsilon \beta} - X_t^{u}}{\epsilon}. 
\end{align}
See Lemma 4.7 \cite{carmona1} for a proof of the well-posedness of the solution of the above SDE.

The covariables introduced are defined to be the adjoint process $(Y_t, Z_t)$ which is a solution to the following BSDE: 
\begin{align}
	dY_t = - \partial_x H(t, X_t, Y_t, Z_t, u_t ) dt +Z_t dW_t, \ \ Y_T= \partial_x g(X_T)
\end{align}
where by definition of the Hamiltonian
\begin{align}
	\partial_x H(t, X_t, Y_t, Z_t, u_t)= \partial_x b^T Y_t + tr\big(\partial_x \sigma^T Z_t) + \partial_x f. 
\end{align}
Then, by applying the It\^o product $Y_t^T DX^u_{t}$ and take expectation, the following equation is easily obtained: 
\begin{align}
	\E[\partial_u g(X^u_T)]&=\E[Y^T_T DX^u_{T} ] \nonumber\\ 
	& = \E[ \int^T_0 Y^T_t  b'_u(t,X^u_t, u_t) \beta_t - (\dxu)^T  f'_x(t,X^u_t, u_t) + tr(Z^T_t  \sigma'_u(t,X^u_t, u_t) \beta_t) dt] \label{terminal_derivative}
\end{align}
Now we find gradient of $J$ in the direction of $\beta$: 
\begin{align}
	\lim_{\epsilon \rightarrow 0}\frac{J(u+\epsilon \beta)-J(u)}{\epsilon} = \int^T_0 \E \big[ f'_x(t,X^u_t, u_t) \dxu + f'_u(t,X^u_t, u_t) \beta_t dt  +  g'_x(X^u_T) DX^u_T \big]
\end{align}
By substituting \eqref{terminal_derivative} into the above equation, we obtain: 
\begin{align}
	\frac{d J(u+\epsilon \beta)}{d \epsilon}\bigg|_{\epsilon=0} = \E[\int^T_0 \partial_{u}H(t, X_t, Y_t, Z_t, u_t) \cdot \beta_t  dt ]
\end{align}
As such, suppose that $u^*$ is the optimal control and since the control domain is convex, we have $\forall s \in U$:
\begin{align}
	\partial_{u}H(t, X_t, Y_t, Z_t, u^*_t) \cdot (s- u^*_t) \geq 0 , a.e. \textit{ in } t, \ a.s.
\end{align}
And we note that given the control $u\in \mathcal{U}_{ad}$, 
\begin{align}
    J'_u(u)|_{t=t_n}&= \E[\partial_{u}H(t_n, X_{t_n}, Y_{t_n}, Z_{t_n}, u_{t_n})]      \nonumber\\ 
    &= \E[b'_u(t_n,X_{t_n}, u_{t_n}) Y_{t_n} + \sigma'_u(t_n,X_{t_n}, u_{t_n}) Z_{t_n} + f'_u(t_n,X_{t_n}, u_{t_n})]
\end{align}

Furthermore, we note that with some additional assumption on the convexity of the terminal loss function and the Hamiltonian, one can obtain global minimum.

\begin{theorem}\label{thm_smp}
    With all the above assumptions, assume in addtion that $g$ is convex and that for each $t \in [0,T]$, the function $(x,u) \rightarrow H(t,x,Y_t,Z_t,u)$ is convex. Then if $u^* \in \mathcal{U}_{ad}$  
    \begin{align}
        H(t,X^{u^*}_t,Y^{u^*}_t,Z^{u^*}_t, u_t^*) = \inf_{u \in U} H(t,X^{u^*}_t,Y^{u^*}_t,Z^{u^*}_t, u), a.s
    \end{align}
    then the control $u$ is an optimal control. 
\end{theorem}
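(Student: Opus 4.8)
The plan is to carry out the classical verification (sufficiency) argument for the Pontryagin maximum principle. Fix an arbitrary admissible control $u \in \mathcal{U}_{ad}$, let $X = X^u$ be its state trajectory, and let $X^* = X^{u^*}$ and $(Y^*,Z^*) = (Y^{u^*},Z^{u^*})$ denote the state and adjoint processes associated with $u^*$; abbreviate $\Delta X_t = X_t - X^*_t$. We must show $J(u) - J(u^*) \ge 0$. First I would split
\[
J(u) - J(u^*) = \E\Big[\int_0^T \big(f(t,X_t,u_t) - f(t,X^*_t,u^*_t)\big)\,dt\Big] + \E[g(X_T) - g(X^*_T)],
\]
and bound the terminal term from below using convexity and smoothness of $g$ together with the terminal condition of the adjoint BSDE: $g(X_T) - g(X^*_T) \ge \partial_x g(X^*_T)^T \Delta X_T = (Y^*_T)^T \Delta X_T$.

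Next I would apply the It\^o product rule to $t \mapsto (Y^*_t)^T \Delta X_t$, using $dY^*_t = -\partial_x H(t,X^*_t,Y^*_t,Z^*_t,u^*_t)\,dt + Z^*_t\,dW_t$, the dynamics $d\Delta X_t = (b(t,X_t,u_t) - b(t,X^*_t,u^*_t))\,dt + (\sigma(t,X_t,u_t) - \sigma(t,X^*_t,u^*_t))\,dW_t$, and the cross-variation term $tr((\sigma(t,X_t,u_t) - \sigma(t,X^*_t,u^*_t))^T Z^*_t)\,dt$. Taking expectations — the It\^o integrals being genuine martingales thanks to the Lipschitz/growth bounds of Assumption~\ref{assumption 1} and the standard moment estimates for the forward SDE and the adjoint BSDE — and regrouping the drift and diffusion pairings into Hamiltonians via $H = by + \sigma z + f$, one checks that the $tr(\sigma^T Z^*)$ contributions cancel in pairs, leaving
\begin{align*}
\E[(Y^*_T)^T \Delta X_T] = \E\Big[\int_0^T \big(& H(t,X_t,Y^*_t,Z^*_t,u_t) - H(t,X^*_t,Y^*_t,Z^*_t,u^*_t) \\
& - \partial_x H(t,X^*_t,Y^*_t,Z^*_t,u^*_t)^T \Delta X_t - f(t,X_t,u_t) + f(t,X^*_t,u^*_t)\big)\,dt\Big].
\end{align*}
Substituting this into the lower bound for $J(u) - J(u^*)$, the running-cost terms cancel and we are left with $J(u) - J(u^*) \ge \E[\int_0^T (H(t,X_t,Y^*_t,Z^*_t,u_t) - H(t,X^*_t,Y^*_t,Z^*_t,u^*_t) - \partial_x H(t,X^*_t,Y^*_t,Z^*_t,u^*_t)^T \Delta X_t)\,dt]$.

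Finally I would invoke the two structural hypotheses. Since $(x,u) \mapsto H(t,x,Y^*_t,Z^*_t,u)$ is convex and, by Assumption~\ref{assumption 1}, differentiable, the gradient inequality gives
\[
H(t,X_t,Y^*_t,Z^*_t,u_t) - H(t,X^*_t,Y^*_t,Z^*_t,u^*_t) \ge \partial_x H(t,X^*_t,Y^*_t,Z^*_t,u^*_t)^T \Delta X_t + \partial_u H(t,X^*_t,Y^*_t,Z^*_t,u^*_t)^T (u_t - u^*_t),
\]
whence $J(u) - J(u^*) \ge \E[\int_0^T \partial_u H(t,X^*_t,Y^*_t,Z^*_t,u^*_t)^T (u_t - u^*_t)\,dt]$. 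Because $u^*_t$ minimizes $s \mapsto H(t,X^*_t,Y^*_t,Z^*_t,s)$ over the convex set $U$, the first-order optimality condition $\partial_u H(t,X^*_t,Y^*_t,Z^*_t,u^*_t)^T (s - u^*_t) \ge 0$ holds for every $s \in U$ (this is the converse of the variational inequality derived before the theorem, obtained by differentiating $\lambda \mapsto H(t,X^*_t,Y^*_t,Z^*_t,u^*_t + \lambda(s-u^*_t))$ at $\lambda = 0^+$); taking $s = u_t(\omega)$ pointwise shows the integrand is nonnegative, so $J(u) \ge J(u^*)$. As $u$ was arbitrary, $u^*$ is optimal.

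The conceptual content is entirely standard, so I expect no serious obstacle: the work is bookkeeping — matching the It\^o-product drift against the definition of $H$ with the correct transpose/trace conventions (this is the step to treat most carefully, since it is where the diffusion-coefficient cross terms must be seen to cancel), and verifying the integrability that lets the martingale parts drop, both of which follow from Assumption~\ref{assumption 1} and a priori estimates for the FBSDE system (cf.\ \cite{carmona1}). One should also note explicitly that convexity of $H$ is used only along the segment joining $(X^*_t,u^*_t)$ to $(X_t,u_t)$, and that differentiability allows the use of the gradient rather than a generic subgradient.
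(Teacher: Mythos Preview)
Your argument is correct and is exactly the classical verification proof of the sufficient SMP; the paper itself does not give a proof but simply refers to \cite{carmona1}, Theorem~4.14, whose proof is precisely the It\^o-product computation and convexity bound you outline. There is nothing to add.
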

For a proof, please refer to \cite{carmona1}, theorem 4.14. 
As such, based on the theorem above, we can write down the following extended Hamiltonian system: 
\begin{align}
    \begin{cases}\label{Ham_system}
        dX^{u^*} = b(t,X_t^{u^*},u_t^*)dt + \sigma(t,X_t^{u^*},u_t^*) dW_t, \ \ &X^{u^*}_0=x_0 \\ 
        dY_t^{u^*}  = - H_x(t, X^{u^*}_t,Y^{u^*}_t,Z^{u^*}_t, {u_t^*}) dt + Z^{u^*}_t dW_t , \ \ &Z^{u^*}_T = \partial_xg(X_T^{u^*}) \\ 
         H(t,X^{u^*}_t,Y^{u^*}_t,Z^{u^*}_t, u_t^*) = \inf_{u \in U} H(t,X^{u^*}_t,Y^{u^*}_t,Z^{u^*}_t, u)
    \end{cases}
\end{align}
In the following, we will suppress the dependency on the control $u$ for the process $(X^{u}_t,Y^{u}_t,Z^{u}_t)$ when its meaning is clear from the context.

\section{The two Algorithm}
In this work, we restrict our control to be the space of \textit{deterministic} controls: 
\begin{align}
 \mathcal{U}_{ad}:= \Big \lbrace u: [0,T] \rightarrow U \big | u \in L_2([0,T]; \bR^k) \Big \rbrace
\end{align}
for some convex set $U \subset \bR^k$. For simplicity, we will work with $d=m=k=1$, higher dimensional generalization is similar.We will work with the whole space $U = \bR^k$, and so we assume $\lim_{||u|| \rightarrow \infty} J(u) \rightarrow \infty$. Also, we let the diffusion term be controlled but it is not a function of the state $X_t$. 

Then according to the stochastic maximum principle, we are considering $u \in  \mathcal{U}_{ad}$ for the following  
\begin{align} \label{ham2}
    \E[H(t,X^{u^*}_t,Y^{u^*}_t,Z^{u^*}_t, u_t^*)] = \inf_{u \in U} \E[H(t,X^{u^*}_t,Y^{u^*}_t,Z^{u^*}_t, u)]. 
\end{align}
\begin{enumerate}
    \item In light of the necessary condition of the SMP, one can consider the projection algorithm since the gradient alone usually provides rich information on the loss functional. Thus, one can propose the following algorithms (see \cite{Hui1, Hui2, Zhao1, feng1}), for each $k \in \lbrace 1, ..., K \rbrace$
\begin{align}
    u_t^{k+1}= u_t^k - \eta_k \nabla_{u} \E[H(t, X^{u^k}_t, Y^{u^k}_t, Z^{u^k}_t, u^k_t)], \ \ \forall t \in [0,T].
\end{align}
where we use the $k$ to denote the $k$th step in the iteration and ${u^k}$ the control related to the FBSDE system. 

\item On the other hand,  to find the optimal control, one can treat \eqref{ham2} as a fixed point problem and tentatively propose the following solution strategy: for $k=1,.., K$, $K \in \mathbb{N}^+$,
\begin{align}
    u_t^{k+1} = \max_{u \in U}\E[H(t,X^{u^k}_t,Y^{u^k}_t,Z^{u^k}_t, u)]
\end{align}
and expect the solution to converge. However, numerical experiments shows that such simple algorithm in most cases will not take place because the map does not form a contraction. Thus, one can use  damping $\rho \in (0,1)$ to ensure that the update is not too rapid: for $k=1,.., K$, $K \in \mathbb{N}^+$,
\begin{align}
    u_t^{k+1} = (1-\rho) \max_{u \in U}\E[H(t,X^{u^k}_t,Y^{u^k}_t,Z^{u^k}_t, u)] + \rho u_t^k. 
\end{align}
We note that the existence of such contraction usually imposes more restrictive structure on the terminal cost $g$ and the structure of the Hamiltonian. As such, we will assume such property hold to facilitate the analysis. 
\end{enumerate}

Based on the structure of the problem \eqref{Ham_system}, one needs to find the numerical approximation for the gradient $J'_u$ or $H'_u$ and one challenge is finding numerical solution for the BSDE $(Y_t,Z_t)$. We first define the discrete control space:
\begin{align}
    \mathcal{U}^N:= \mathcal{U} \cap \mathcal{C}^N, \ \ \ \mathcal{C}^N:=  \big \lbrace u \big | u:= \sum^{N-1}_{i=1} a_n \mathbf{1}_{[t_n, t_{n+1}), n=0,...,N-1} \big | a_n \in \bR^k \big \rbrace    
\end{align}
that is, $\mathcal{U}^N$ consists of the piecewise control functions which are admissible. 

Classically, the BSDE finds the following discretization scheme, for some $u \in \mathcal{U}^N$ 
\begin{align}
Y^N_{t_n}&=\E_{t_n}[Y^N_{t_{n+1}}]+h\E_{t_n}[b'_x(t_n, X^N_{t_{n}},u_{t_{n}})Y^N_{t_{n+1}}+f'_x(t_n, X_n,u_{t_{n}})] \label{classical_y} \\
Z^N_{t_n}&=\E_{t_n}[Y^N_{t_{n+1}}\Delta W_{n}]/h \label{classical_z}
\end{align}
where $h:= \frac{T}{N}$ is the uniform mesh size, $\E_{t_n}[ \cdot ] := \E[ \cdot | \mathcal{F}_{t_n} ]$ and $X^N_{t_n}$ is the solution of the numerical SDE under the Euler scheme
\begin{align}
    X^N_{t_{n+1}}=X^N_{t_n}+b(t_n, X^N_{t_{n}},u_{t_n})h+ \sigma(t_n, u_{t_n}) \Delta W_{t_n} \ \ X_0=x_0. 
\end{align}
Typical methods include regression methods, Monte Carlo, finding values $(Y^N_{t_n},Z^N_{t_n})$ at fixed spacial locations and obtain function by interpolation, and deep learning \cite{Gobet1, Gobet2, pham4, QHan}. Then, the key update steps can be summarized as $\forall n \in \lbrace 0, ..., N-1 \rbrace$, $k=0,1,2...,K$:
\begin{align}\label{param_N_update}
\begin{cases}
 u_{t_n}^{N,k+1}= u_{t_n}^{N,k} - \eta_k \E[\nabla_{u} H(t_n, X^{N,u^k}_{t_n}, Y^{N,u^k}_{t_n}, Z^{N,u^k}_{t_n}, u^k_{t_n})],   \ \ \ &\text{Projection Algorithm}  \\
 u_{t_n}^{N,k+1} = (1-\rho) \max_{u \in U}\E[H(t_n,X^{N,u^k}_{t_n},Y^{N,u^k}_{t_n},Z^{N,u^k}_{t_n}, u)] + \rho u_{t_n}^k. \ \ \ &\text{Damped contraction}
\end{cases}
\end{align}

However, it is noted that solving BSDEs is overall a challenging and computationally expensive task. The challenge is even more pronounced under the projection or the current contraction framework as at each iteration $k \in \lbrace 1, ..., K \rbrace$ a new system for FBSDE has to be solved repeated. 

On the other hand, we note that the goal of solving the stochastic optimal control problem is to find either the value function or the control instead of solving explicitly the BSDEs. As such, we use (see also \cite{Hui1, feng1}) the sample-wise approximation of the BSDE which turns out to be an unbiased sample estimation of the classical numerical solution $(Y^N_{t_n},Z^N_{t_n})$. More specifically, we have: 
\begin{align}
X_{n+1} &=X_n + b(t_n,X_n, u_n) h + \sigma(t_n, u_n)\Delta W_n \label{x_sch} \\ 
Y_n &=Y_{n+1}+h \big( b_x(t_n,X_n, u_n) Y_{n+1} + f'_x(t_n,X_n, u_n) \big), \ \ Y_T = g_x(X_T)  \label{y_sch}  \\
Z_n &= \frac{Y_{n+1} \Delta W_n}{h} \label{z_sch} 
\end{align}
where $\Delta W_n \sim \mathcal{N}(0,h)$ is a Normal random variable.

For the projection algorithm, we observed that the simulated samples $Z_n$ typically has high variance with increasing number of temporal discretization $N$. To reduce variance, we simply use independently simulated trajectories of batch size $N$ or larger to compute the estimated gradient. 
That is, given $j'_{u}|_{t=t_n}:= \partial H_u(t_n,X^{u}_n,Y^{u}_n,Z^{u}_n, u_n)=b'_u(t_n,X^u_n,u_n)Y^u_n+\sigma'_u(t_n,X^u_n,u_n) Z^u_n +f'_u(t_n,X^u_n,u_n)$
we use $M$ in dependent samples to estimate the gradient. 
    \begin{align}
    \overline{(j'_{u})}_n&=\frac{1}{M}\sum^M_{i=1} \partial_u H (t_n,X^{u, i}_n,Y^{u, i}_n,Z^{u, i}_n, u_n ). 
\end{align}
It is expected that such batch estimator will lead to faster convergence. In the meantime, we note that an higher order convergence rate (1st order) is possible given a more accurate sampling for the solution of forward SDE than Euler is used. This is mostly because the temporal semi-discretization of solution of BSDE itself has first order convergence.  We will use $\Psi$ and 
\begin{align}
    X^{u}_{n+1} =\Psi(X^{u}_{n}, u^l_{n}, \Delta W_{n}),
\end{align}
to denote update for the state under a higher order scheme. We make explicit the choice for the higher order numerical scheme in the appendix. 

For the damped contraction algorithm, we still use the Euler discretization for the forward SDE while keeping the batch simulation for the solution of FBSDEs for the estimate of $u$ that minimizes the Hamiltonian. 

As such, based on the sampling scheme \eqref{x_sch}-\eqref{z_sch}, in light of \eqref{param_N_update}, we propose the Algorithm \ref{algorithm_batch_sample} for finding the optimal control. 

\begin{algorithm}
\caption{Algorithm for Batch sample-wise contraction via stochastic maximum principle.}\label{algorithm_batch_sample}
\begin{algorithmic}[1]
\REQUIRE 
Initializing the following 
\begin{itemize}
    \item The model parameters, i.e. functions $b,\sigma, f, g $, the batch size $M$, the total number of iterations $K$, $x_0$ and $\eta_k$ (learning rate) or $\rho$ the contraction rate.
    \item Total number of temporal discretization $N$, with terminal time $T=1$. 
    \item Initialize the control $u^0=0$ which is a vector of zeros. 
    \item Whether the update is based on Contraction or SGD. 
\end{itemize}
\FOR{$k=0,1,...,K-1$}
    \STATE{ 
\begin{enumerate}
    \item Simulate for $n=0,...,N-1$, $i \in \lbrace 1, ..., M \rbrace$,
    \begin{align}
        X^{u^k, i}_{n+1} &=X^{u^k, i}_n + b(t_n,X^{u^k, i}_n, u^k_n) h + \sigma(t_n, u^k_n)\Delta W^{k,i}_n
    \end{align}
    or simulate $X^{u^k, i}_{n+1}$ based on a higher order scheme $\Psi(X^{u^k,i}_{n}, u^k_{n}, \Delta W^{k,i}_{t_n})$.
    \item Simulate backward for for $n=N-1,...,0$, $i \in \lbrace 1, ..., M \rbrace$:
    \begin{align}
        Y^{u^k, i}_n &=Y^{u^k, i}_{n+1}+h \big( b_x(t_n,X^{u^k, i}_n, u^k_n) Y^k_{n+1}+ f_x(t_n,X^{u^k, i}_n, u^k_n) \big), \ \ Y_T = g_x(X^{u^k, i}_T)   \\
        Z^{u^k, i}_n &= \frac{Y^{u^k, i}_{n+1} \Delta W^{k,i}_n}{h} 
\end{align}
\item Compute for each  $n=0,...,N-1$
\begin{enumerate} [i)]
    \item If Contraction (see Assumption \ref{Contraction_setup_assumption} for the definition of $\bar{H}$) : 
    \begin{align}
    \tilde{u}^{k+1}_n&=\frac{1}{M}\sum^M_{i=1}\Bar{H}(t_n,X^{u^k, i}_n,Y^{u^k, i}_n,Z^{u^k, i}_n) \nonumber \\ 
     u_n^{k+1} &= (1-\rho) \tilde{u}^{k+1}_n+ \rho u_n^k. 
\end{align}
\item If SGD: 
    \begin{align}
    \overline{(j'_{u})}^{k+1}_n&=\frac{1}{M}\sum^M_{i=1} \partial H_u (t_n,X^{u^k, i}_n,Y^{u^k, i}_n,Z^{u^k, i}_n, u_n^k ) \nonumber \\ 
     u_n^{k+1} &= -\eta_k \overline{(j'_{u})}^{k+1}_n +  u_n^k. 
\end{align}
\end{enumerate}
\end{enumerate}
    }
\ENDFOR
\RETURN
The collection of controls $\lbrace u^{K}_{n} \rbrace_{n=0,...,N-1}$.
\end{algorithmic}
\end{algorithm}

\section{Proof of convergence}
In this section, we prove the convergence of the projection algorithm (1st order convergence in $h$) under the strong convexity assumption as well as the convergence of the damped contraction mapping algorithm under stronger assumption. 

\begin{assumption} \label{strong_convexity}
We make the following strong convexity assumption on the cost functional $J$: 
for some constant $\lambda>0$, and any two controls $v,w \in \mathcal{U}$:
\begin{equation}
	\langle J_u'(v)-J_u'(w),v-w \rangle \geq \lambda ||v-w||_2^2
\end{equation}
Equivalently, one also have the following inequality: 
\begin{align}
	J(v)-J(w) \geq \langle J'(w), v-w \rangle + \frac{\lambda}{2}||v-w||^2_2.
\end{align}
\end{assumption}

\begin{assumption}{\label{Contraction_setup_assumption}}
(Assumption for the damped contraction algorithm, a linear quadratic case.)
   \begin{enumerate}[I.]
    \item  We assume that the drift takes the form $b(x,u):=Ax + Bu+ C$ or the form $b(x,u):=A(u+1)x + Bu+ C$. We assume the diffusion term takes the form $\sigma(u) = D u +F $. The running cost is assumed to take the form $\frac{1}{2} \int^T_0 |u_t|^2 dt$. 
    \item  We assume that there is a function $\bar{H}:[0,T]\times \bR^d \times \bR^d \times \bR^d \rightarrow U$ such that: 
    \begin{align}
        \bar{H}(t,X_t,Y_t,Z_t)=\text{argmin}_{u \in U} H(t,X_t,Y_t,Z_t,u).
    \end{align}
    Based on the first point, the map $\bar{H}$ takes the following form for some $\alpha, \beta, \gamma$: 
\begin{align}
    \bar{H}(t,X_t,Y_t,Z_t)=\alpha(X_t) Y_t+\beta_t Z_t+ \gamma(X_t).
\end{align}
We then further assume that there exists a negative $\mu$ such that 
    \begin{align}
        \langle \int^T_0 \E[\bar{H}(t,X^{u}_t,Y^{u}_t,Z^{u}_t)]-\E[\bar{H}(t,X^{v}_t,Y^{v}_t,Z^{v}_t)], u_t-v_t \rangle dt  \leq \mu ||u-v||_2^2. \label{assumption_convexity_2}
    \end{align}
\end{enumerate} 
\end{assumption}
\begin{remark}
We take $H'_u=BY+DZ+u$ which means $\bar{H}=-BY-DZ$, and so $\alpha=-B, \beta=-D$. 
    We note that $\mathrm{II}$ is intimately related to the strong convexity assumption: 
    \begin{align}
        \langle J_u'(v)-J_u'(w),v-w \rangle \geq \lambda ||v-w||_2^2
    \end{align}
which in the current set up reads as 
\begin{align}
    &\int^T_0 \E \Big[ \langle BY^u_t+DZ^u_t+u_t - (BY^v_t+DZ^v_t+v_t), u_t-v_t  \rangle dt \Big] \geq \lambda ||u-v||_2^2 \nonumber\\
   & \Leftrightarrow \int^T_0 \E \Big[ \langle \alpha Y^u_t+ \beta Z^u_t - (\alpha Y^v_t+ \beta Z^v_t), u_t-v_t  \rangle dt \Big]  \leq  -(\lambda-1)||u-v||_2^2
\end{align}
Hence, if the factor $\lambda$ is larger than $1$, $\mu$ will take negative values. We also comment it is also viable to assume $\mu$ takes a small enough positive value in the argument of Theorem \ref{thm_contract_discrete}.  
\end{remark}

To proceed with the proof, we first list the following propositions which can be proved based on Assumption \ref{assumption 1}. Those results highlights the fact that under strong regularity assumptions on the coefficients of the differential equations, the solution of the SDEs/BSDEs is stable with respect to the controls. 


\begin{proposition}\label{continuous_Xbound}
    Under Assumption \ref{assumption 1}, we have that for $u,v \in \mathcal{U}$: 
\begin{align}
    \sup_{t \in [0,T]}\E[|X_t^u-X^v_t|^2] \leq ||u-v||^2.
\end{align}
\end{proposition}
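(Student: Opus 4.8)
The plan is to run the classical Gr\"onwall-type stability estimate for It\^o SDEs. Set $\Delta X_t := X^u_t - X^v_t$. Since both trajectories start from the same $x_0$, the processes satisfy the integral identity
\begin{align}
\Delta X_t = \int_0^t \big( b(s,X^u_s,u_s) - b(s,X^v_s,v_s) \big)\, ds + \int_0^t \big( \sigma(s,X^u_s,u_s) - \sigma(s,X^v_s,v_s) \big)\, dW_s .
\end{align}
Using $|a+b|^2 \le 2|a|^2 + 2|b|^2$, applying Cauchy--Schwarz to the Lebesgue integral and the It\^o isometry to the stochastic integral, and then taking expectations, I obtain
\begin{align}
\E\big[|\Delta X_t|^2\big] \le 2T \int_0^t \E\big[|b(s,X^u_s,u_s)-b(s,X^v_s,v_s)|^2\big]\, ds + 2 \int_0^t \E\big[|\sigma(s,X^u_s,u_s)-\sigma(s,X^v_s,v_s)|^2\big]\, ds .
\end{align}

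Next I would insert the Lipschitz bounds from Assumption \ref{assumption 1}, namely $|b(s,X^u_s,u_s)-b(s,X^v_s,v_s)| \le C(|\Delta X_s| + |u_s - v_s|)$ and the analogous bound for $\sigma$, together with $|c+d|^2 \le 2|c|^2+2|d|^2$ once more. Since $u,v$ are deterministic, $\E[|u_s - v_s|^2] = |u_s-v_s|^2$ and $\int_0^T |u_s-v_s|^2\, ds = \|u-v\|_2^2$. This gives an inequality of the form
\begin{align}
\E\big[|\Delta X_t|^2\big] \le C_1 \int_0^t \E\big[|\Delta X_s|^2\big]\, ds + C_2 \|u-v\|_2^2 ,
\end{align}
with $C_1, C_2$ depending only on $C$ and $T$. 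Before invoking Gr\"onwall I would record that $\phi(t):=\E[|\Delta X_t|^2]$ is finite and bounded on $[0,T]$ --- the standard a priori second-moment estimate for SDEs with Lipschitz (hence linear-growth) coefficients, a consequence of Assumption \ref{assumption 1}. Gr\"onwall's lemma then yields $\phi(t) \le C_2 e^{C_1 T}\|u-v\|_2^2$ for all $t \in [0,T]$, hence $\sup_{t\in[0,T]} \E[|\Delta X_t|^2] \le C_2 e^{C_1 T}\|u-v\|_2^2$, which is the asserted bound with the $T$- and $C$-dependent constant absorbed (as is done implicitly throughout the paper). Note that since only $\sup_t \E[|\Delta X_t|^2]$ is claimed, the plain It\^o isometry suffices and no Burkholder--Davis--Gundy/Doob maximal inequality is needed.

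I do not expect a genuine obstacle here; this is essentially a textbook argument. The only points that need care are (i) justifying a priori that $\phi$ is finite, so that Gr\"onwall is applicable (or, alternatively, first proving the estimate for a stopped version $X_{t\wedge\tau_n}$ and letting $\tau_n\to\infty$), and (ii) being transparent that the clean inequality $\sup_t \E[|\Delta X_t|^2] \le \|u-v\|^2$ holds only up to the multiplicative constant $C_2 e^{C_1 T}$; a constant-free statement would require $T$ and $C$ sufficiently small, or a restatement with $\lesssim$.
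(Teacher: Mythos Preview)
Your argument is correct and is the standard continuous-time Gr\"onwall stability estimate. The paper does not actually supply a proof of Proposition \ref{continuous_Xbound}; it is listed among results that ``can be proved based on Assumption \ref{assumption 1}'' and only the discrete analogue (Proposition \ref{stab_x}) is proved in the appendix, via exactly the same square-and-Gr\"onwall strategy in discrete time. Your proof is the natural continuous-time counterpart of that argument, and your remark that the displayed inequality should be read as $\sup_t \E[|\Delta X_t|^2] \le C\,\|u-v\|^2$ with an absorbed constant is accurate and consistent with the paper's conventions.
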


\begin{proposition}\label{stab_x}
	Under the Assumption \ref{assumption 1}, Consider the two discrete SDEs with control $u,v \in \mathcal{U}^N$. 
	 Let $\xuv_n:=\xu_n-X^v_n$, then 
	\begin{equation}
		\max_{0 \leq n \leq N} \E[|\xuv_n|^2] \leq C||u-v||^2.
	\end{equation}
\end{proposition}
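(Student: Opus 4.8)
The plan is to run the standard discrete Gr\"onwall estimate on the Euler scheme, exploiting that the cross terms involving the Brownian increments vanish in expectation and that the diffusion coefficient here does not depend on the state. This is the discrete analogue of Proposition \ref{continuous_Xbound}.

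First I would write the one-step identity for the difference. With $\xuv_n := \xu_n - X^v_n$ and $\xuv_0 = x_0 - x_0 = 0$,
\begin{align}
\xuv_{n+1} = \xuv_n + \big(b(t_n,\xu_n,u_n) - b(t_n,X^v_n,v_n)\big)h + \big(\sigma(t_n,u_n) - \sigma(t_n,v_n)\big)\Delta W_n .
\end{align}
Squaring and taking expectations, observe that $\Delta W_n$ is independent of $\mathcal{F}_{t_n}$ with zero mean and variance $h$, that $\xuv_n$ is $\mathcal{F}_{t_n}$-measurable, and that $\sigma(t_n,u_n)-\sigma(t_n,v_n)$ is deterministic; hence every cross term carrying a factor $\Delta W_n$ drops, leaving
\begin{align}
\E[|\xuv_{n+1}|^2] = \E\big[|\xuv_n + (\Delta b_n)h|^2\big] + |\sigma(t_n,u_n)-\sigma(t_n,v_n)|^2\, h ,
\end{align}
where $\Delta b_n := b(t_n,\xu_n,u_n) - b(t_n,X^v_n,v_n)$.

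Next I would expand the square and invoke the uniform Lipschitz bounds of Assumption \ref{assumption 1}, namely $|\Delta b_n| \leq C(|\xuv_n| + |u_n - v_n|)$ and $|\sigma(t_n,u_n)-\sigma(t_n,v_n)| \leq C|u_n-v_n|$, and apply Young's inequality to the cross term $2h\langle \xuv_n, \Delta b_n\rangle$. Absorbing the $O(h^2)$ contributions into $O(h)$ using $h \leq T$, this produces a one-step recursion
\begin{align}
\E[|\xuv_{n+1}|^2] \leq (1 + Ch)\,\E[|\xuv_n|^2] + Ch\,|u_n - v_n|^2 .
\end{align}
Iterating from $\xuv_0 = 0$ gives $\E[|\xuv_n|^2] \leq Ch\sum_{j=0}^{n-1}(1+Ch)^{\,n-1-j}|u_j-v_j|^2 \leq (1+Ch)^N\, Ch\sum_{j=0}^{N-1}|u_j-v_j|^2$. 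Since $(1+Ch)^N \leq e^{CNh} = e^{CT}$, and since $u,v \in \mathcal{U}^N$ are piecewise constant on the mesh so that $h\sum_{j=0}^{N-1}|u_j-v_j|^2 = ||u-v||^2$, taking the maximum over $0 \leq n \leq N$ yields the claim with $C$ depending only on the Lipschitz constants and $T$.

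There is no essential obstacle here — the computation is routine. The only points that need a little care are the vanishing of the Brownian cross terms (which uses the $\mathcal{F}_{t_n}$-measurability of $\xuv_n$ and the mean-zero, independence property of $\Delta W_n$), and the observation that the telescoped sum reassembles exactly into the $L^2([0,T])$-norm of $u-v$ precisely because the admissible controls are piecewise constant on the discretization mesh.
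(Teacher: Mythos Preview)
Your proposal is correct and follows essentially the same route as the paper's own proof: both square the one-step recursion, use independence of $\Delta W_n$ to drop the cross terms, apply the Lipschitz bounds together with Young's inequality to reach the recursion $\E[|\xuv_{n+1}|^2] \leq (1+Ch)\E[|\xuv_n|^2] + Ch|u_n-v_n|^2$, and then invoke discrete Gr\"onwall. The only cosmetic difference is that the paper writes out Young's inequality with parameter $\epsilon$ and then sets $\epsilon=h$, whereas you absorb the $O(h^2)$ terms directly; the arguments are otherwise identical.
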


\begin{proposition}{\label{stab_yz}}
	Under the Assumption \ref{assumption 1}, let $\yuv_n:=Y^{N,u}_{t_n}-Y^{N,v}_{t_n}$,$\zuv_n:=Z^{N,u}_{t_n}-Z^{N,v}_{t_n}$,
	  we have the following estimates, for $u, v \in \mathcal{U}^N$:
	\begin{equation}
		\sup_{0\leq n\leq N}\E[|\yuv_n|^2]+h\sum^{N-1}_{n=0} \E[|\zuv_n|^2] \leq C||u-v||^2.
	\end{equation}
\end{proposition}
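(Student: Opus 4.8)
The plan is to run the standard one-step-in-time energy estimate for the difference of the two discrete BSDEs and close it with a discrete Gronwall inequality, feeding in Proposition~\ref{stab_x} for the $L^2$-stability of $\xuv_n$. A simplification here is that $\sigma$ does not depend on the state, so no $Z$-term appears in the generator of \eqref{classical_y}. Abbreviating
\[
\delta b_n := b'_x(t_n,\xun,u_n)\,Y^{N,u}_{t_{n+1}} - b'_x(t_n,\xvn,v_n)\,Y^{N,v}_{t_{n+1}} + f'_x(t_n,\xun,u_n) - f'_x(t_n,\xvn,v_n),
\]
the scheme \eqref{classical_y}--\eqref{classical_z} gives, for the differences driven by the common Brownian motion, $\yuv_n = \E_{t_n}[\yuv_{n+1}] + h\,\E_{t_n}[\delta b_n]$ and $\zuv_n = \E_{t_n}[\yuv_{n+1}\,\Delta W_n]/h$, with terminal value $\yuv_N = g_x(\xu_N) - g_x(X^v_N)$.

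I would first record the elementary bound $h\,|\zuv_n|^2 \le \E_{t_n}[|\yuv_{n+1}|^2] - |\E_{t_n}[\yuv_{n+1}]|^2$, obtained from $h\,\zuv_n = \E_{t_n}\big[(\yuv_{n+1} - \E_{t_n}[\yuv_{n+1}])\,\Delta W_n\big]$, conditional Cauchy--Schwarz, and $\E_{t_n}[|\Delta W_n|^2] = h$. Squaring the $Y$-recursion, applying $|a+hb|^2 \le (1+h)|a|^2 + 2h|b|^2$ (valid for $h\le 1$), taking expectations and using Jensen, I obtain $\E[|\yuv_n|^2] \le (1+h)\,\E[|\E_{t_n}[\yuv_{n+1}]|^2] + Ch\,\E[|\delta b_n|^2]$. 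Using the first bound to replace $\E[|\E_{t_n}[\yuv_{n+1}]|^2]$ by $\E[|\yuv_{n+1}|^2] - h\,\E[|\zuv_n|^2]$ then yields the clean one-step inequality
\[
\E[|\yuv_n|^2] + h\,\E[|\zuv_n|^2] \le (1+Ch)\,\E[|\yuv_{n+1}|^2] + Ch\,\E[|\delta b_n|^2];
\]
the key point is that the generator-difference term carries a factor $h$, so the constants stay bounded as $N\to\infty$.

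It then remains to estimate $\E[|\delta b_n|^2]$. Adding and subtracting, using boundedness of $b'_x$ and the Lipschitz continuity of $b'_x$ and $f'_x$, and recalling that the admissible controls are deterministic, one gets
\[
\E[|\delta b_n|^2] \le C\,\E[|\yuv_{n+1}|^2] + C\,\E\big[(|\xuv_n|^2 + |u_n-v_n|^2)(1+|Y^{N,v}_{t_{n+1}}|^2)\big].
\]
The first term is absorbed into $(1+Ch)\E[|\yuv_{n+1}|^2]$ after summation. For the cross term I would invoke a uniform a priori moment bound $\sup_{n,N}\E[|Y^{N,v}_{t_n}|^p]\le C_p$ (standard for this scheme under Assumption~\ref{assumption 1}) together with H\"older's inequality, reducing it to $C\,\E[|\xuv_n|^4]^{1/2} + C|u_n-v_n|^2$, and then use the $L^4$-analogue of Proposition~\ref{stab_x}, proved by the identical argument, to bound $\E[|\xuv_n|^4]^{1/2}$ by $C\|u-v\|^2$. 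Summing $h\sum_n$ of the one-step inequality, absorbing the $\E[|\yuv_{n+1}|^2]$ terms, and applying the discrete Gronwall lemma gives $\sup_n\E[|\yuv_n|^2] + h\sum_n\E[|\zuv_n|^2] \le e^{CT}\big(\E[|\yuv_N|^2] + C\|u-v\|^2\big)$; since $\E[|\yuv_N|^2] = \E[|g_x(\xu_N)-g_x(X^v_N)|^2] \le C\,\E[|\xuv_N|^2] \le C\|u-v\|^2$ by Lipschitz continuity of $g_x$ and Proposition~\ref{stab_x}, the claim follows.

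The main obstacle I anticipate is precisely the handling of the cross terms $(|\xuv_n| + |u_n-v_n|)\,|Y^{N,v}_{t_{n+1}}|$: closing the estimate this way requires uniform-in-$N$ higher-moment bounds for the discrete adjoint process $Y^{N,v}$ and an $L^4$ stability bound for $\xuv_n$, neither of which is among the $L^2$ statements available so far, though both follow from the same kind of computation. A cleaner alternative, available if one strengthens the hypotheses so that $g_x$ and $f'_x$ are bounded (as in the linear--quadratic setting of Assumption~\ref{Contraction_setup_assumption}, where in fact $f'_x\equiv 0$), is to note that $\E_{t_n}[|Y^{N,v}_{t_{n+1}}|^2]$ is then a.s.\ bounded by a constant, so the cross term is directly $\le C|\xuv_n|^2 + C|u_n-v_n|^2$ and no extra moment machinery is needed; everything else is routine bookkeeping with Young's inequality and discrete Gronwall.
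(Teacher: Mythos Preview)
Your argument is correct and follows the same overall strategy as the paper: a one--step energy inequality for $\yuv_n$ in which the $Z$--contribution appears with a factor $h$, followed by an estimate of the generator difference and the backward discrete Gronwall lemma. The only technical difference is that the paper introduces the martingale representation $Y^{N,u}_{t_{n+1}}=\E_{t_n}[Y^{N,u}_{t_{n+1}}]+\int_{t_n}^{t_{n+1}}\bar Z^u_t\,dW_t$ and bounds $h|\zuv_n|^2$ by $\E_{t_n}\!\int_{t_n}^{t_{n+1}}|\bar\zuv_t|^2\,dt$, whereas you obtain the equivalent bound $h|\zuv_n|^2\le \E_{t_n}[|\yuv_{n+1}|^2]-|\E_{t_n}[\yuv_{n+1}]|^2$ directly from conditional Cauchy--Schwarz; both devices yield the same one--step inequality.

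On the cross term $(|\xuv_n|+|u_n-v_n|)\,|Y^{N,u}_{t_{n+1}}|$ you are in fact more careful than the paper. The paper pulls the $\mathcal F_{t_n}$--measurable factor out of $\E_{t_n}[\cdot]$ and then, after squaring and taking expectations, writes the result as the product $\E[|\xuv_n|^2+|u_n-v_n|^2]\,\sup_n\E[|Y^{N,u}_{t_n}|^2]$ ``by Cauchy's inequality''. Turning this into an honest product requires exactly the inputs you identify (either a uniform a.s.\ bound on $\E_{t_n}[|Y^{N,u}_{t_{n+1}}|^2]$, available when $g_x,f'_x$ are bounded, or an $L^4$ stability bound for $\xuv_n$ together with a fourth--moment bound for $Y^{N,u}$). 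So the ``obstacle'' you flag is real, and either of your proposed fixes closes it; with that in hand the rest is identical.
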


Now we state the proposition related to the damped contraction mapping setting. 
\begin{proposition} \label{continuous_YZbound}
    Under  Assumption \ref{assumption 1} and Assumption \ref{Contraction_setup_assumption} for any control $u,v \in \mathcal{U}$, 
\begin{align}
    \sup_{0 \leq t \leq T} \E[|Y_t^u-Y_t^v|^2] + \E[\int^T_0 |Z_t^u-Z_t^v|^2 dt ] \leq ||u-v||^2.
\end{align}
\end{proposition}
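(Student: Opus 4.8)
The plan is to read Proposition \ref{continuous_YZbound} as a standard stability estimate for the linear adjoint BSDE, and to prove it by combining It\^o's formula with Gr\"onwall's inequality, letting the forward bound of Proposition \ref{continuous_Xbound} enter through the driver and the terminal condition. Set $\Delta Y_t := Y^u_t - Y^v_t$, $\Delta Z_t := Z^u_t - Z^v_t$, and $\Delta X_t := X^u_t - X^v_t$. Subtracting the two adjoint equations, $(\Delta Y,\Delta Z)$ solves a BSDE with terminal value $\partial_x g(X^u_T) - \partial_x g(X^v_T)$ and driver
\[
\Delta F_t = \partial_x b(t,X^u_t,u_t)\,\Delta Y_t + \big(\partial_x b(t,X^u_t,u_t) - \partial_x b(t,X^v_t,v_t)\big) Y^v_t + \big(\partial_x f(t,X^u_t,u_t) - \partial_x f(t,X^v_t,v_t)\big),
\]
together with, in the general case, the analogous $\partial_x\sigma$ terms contracted against $\Delta Z_t$ and $Z^v_t$. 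Note that in the linear--quadratic setting of Assumption \ref{Contraction_setup_assumption} the diffusion does not depend on $x$ and $f=\tfrac12|u|^2$, so those terms vanish and no $\Delta Z_t$ appears in the driver at all, which streamlines the argument considerably.

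Next I would apply It\^o's formula to $|\Delta Y_t|^2$ on $[t,T]$ and take expectations, obtaining
\[
\E[|\Delta Y_t|^2] + \E\!\int_t^T |\Delta Z_s|^2\,ds = \E[|\partial_x g(X^u_T)-\partial_x g(X^v_T)|^2] + 2\,\E\!\int_t^T \langle \Delta Y_s, \Delta F_s\rangle\,ds .
\]
The terminal term is $\le C\,\E[|\Delta X_T|^2]$ since $\partial_x g$ is Lipschitz (affine, in the quadratic case). For the driver term I split $\Delta F_s$ as above: the piece $\partial_x b\cdot\Delta Y_s$ contributes $\le C\,\E|\Delta Y_s|^2$ by boundedness of $\partial_x b$ from Assumption \ref{assumption 1}; the pieces $(\partial_x b(\cdots)-\partial_x b(\cdots))Y^v_s$ and $(\partial_x f(\cdots)-\partial_x f(\cdots))$ are controlled using the Lipschitz property of $b'_x$ (resp.\ $f'_x$) and Young's inequality, yielding $\le \tfrac12\E|\Delta Y_s|^2 + C\,\E[(|\Delta X_s|^2+|u_s-v_s|^2)(1+|Y^v_s|^2)]$. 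Here I invoke the standard a priori moment bound $\sup_s \E|Y^v_s|^q \le C$ for the adjoint process (finite because $g$ has at most quadratic growth, so $g_x$ has linear growth) to absorb the $|Y^v_s|^2$ factor. In the general case the $\partial_x\sigma$ contributions are handled the same way, except the term linear in $\Delta Z_s$ must be dominated via Young's inequality with a small weight so a fraction of $\E\int_t^T|\Delta Z_s|^2$ can be moved to the left side.

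Collecting terms gives
\[
\E[|\Delta Y_t|^2] \le C\,\E[|\Delta X_T|^2] + C\!\int_t^T \E|\Delta X_s|^2\,ds + C\!\int_t^T \E|\Delta Y_s|^2\,ds + C\|u-v\|_2^2 ,
\]
and Proposition \ref{continuous_Xbound} collapses the first two terms to $C\|u-v\|_2^2$; Gr\"onwall's inequality then yields $\sup_{0\le t\le T}\E|\Delta Y_t|^2 \le C\|u-v\|_2^2$, and substituting back into the It\^o identity bounds $\E\int_0^T|\Delta Z_s|^2\,ds$ by the same quantity, proving the claim (absorbing all constants into the implicit $C$, as the statement does). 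I expect the only genuinely delicate point to be the cross term $(\partial_x b(t,X^u,u)-\partial_x b(t,X^v,v))Y^v_s$: one must ensure the Lipschitz gain in $\Delta X$ and $u-v$ survives the multiplication by the possibly large $Y^v$, which is precisely where the a priori moment estimate for the adjoint BSDE is essential; the rest is routine.
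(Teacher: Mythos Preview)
Your proposal is correct and follows essentially the same route as the paper: apply It\^o's formula to $|\Delta Y_t|^2$, split the driver difference into the $\partial_x b\cdot\Delta Y$ piece and the cross term $(\partial_x b(X^u,u)-\partial_x b(X^v,v))Y$, control the latter via the Lipschitz bound on $b'_x$ together with an a~priori moment bound on the adjoint, then apply Gr\"onwall and feed in Proposition~\ref{continuous_Xbound}. The paper's write-up in fact invokes the slightly stronger quantity $\E[(\sup_s|Y^u_s|)^2]$ for the cross term, whereas your $\sup_s\E|Y^v_s|^q$ is all that is needed; otherwise the arguments coincide, including your observation that under Assumption~\ref{Contraction_setup_assumption} no $\sigma'_x$ or $f'_x$ terms survive.
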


\begin{proposition}{\label{contraction_stab_yz}}
Let  Assumption \ref{assumption 1} and Assumption \ref{Contraction_setup_assumption} hold. Let $\bar{H}$ be defined as follows and $u,v \in \mathcal{U}$
    \begin{align}
    \bar{H}(t,X_t,Y_t,Z_t)=\alpha(X_t) Y_t+\beta_t Z_t+ \gamma(X_t).
\end{align}
Then we have for some $C > 0$,
\begin{align}
    \int^T_0 \big|\E[\bar{H}(t,X^{u}_t,Y^{u}_t,Z^{u}_t)]-\E[\bar{H}(t,X^{v}_t,Y^{v}_t,Z^{v}_t)] \big|^2 dt \leq C ||u-v||^2.
\end{align}
\end{proposition}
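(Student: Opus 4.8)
The plan is to use the explicit affine-in-state form of $\bar H$ forced by Assumption \ref{Contraction_setup_assumption}, split the difference $\bar H(t,X^u_t,Y^u_t,Z^u_t)-\bar H(t,X^v_t,Y^v_t,Z^v_t)$ by adding and subtracting, and then reduce every piece to the stability estimates of Propositions \ref{continuous_Xbound} and \ref{continuous_YZbound}. Concretely, for each fixed $t$ I would write
\begin{align}
\bar H(t,X^u_t,Y^u_t,Z^u_t) - \bar H(t,X^v_t,Y^v_t,Z^v_t)
&= \alpha(X^u_t)\big(Y^u_t - Y^v_t\big) + \big(\alpha(X^u_t)-\alpha(X^v_t)\big)Y^v_t \nonumber\\
&\quad + \beta_t\big(Z^u_t - Z^v_t\big) + \big(\gamma(X^u_t)-\gamma(X^v_t)\big),
\end{align}
take expectations, apply the triangle inequality, and then Cauchy--Schwarz inside each resulting expectation, so that $|\E[\bar H(t,X^u_t,Y^u_t,Z^u_t)]-\E[\bar H(t,X^v_t,Y^v_t,Z^v_t)]|$ is controlled by the sum of $\sqrt{\E[|\alpha(X^u_t)|^2]}\sqrt{\E[|Y^u_t-Y^v_t|^2]}$, $\sqrt{\E[|\alpha(X^u_t)-\alpha(X^v_t)|^2]}\sqrt{\E[|Y^v_t|^2]}$, $|\beta_t|\sqrt{\E[|Z^u_t-Z^v_t|^2]}$ and $\sqrt{\E[|\gamma(X^u_t)-\gamma(X^v_t)|^2]}$.

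Then I would assemble the ingredients. Under the linear-quadratic structure of Assumption \ref{Contraction_setup_assumption}, minimising the Hamiltonian makes $\alpha$ either constant (for drift $Ax+Bu+C$) or affine in $x$ (for drift $A(u+1)x+Bu+C$), $\beta_t$ a bounded deterministic function, and $\gamma$ affine (indeed $\gamma\equiv 0$ in the cases computed in the Remark); in all cases $\alpha,\gamma$ are globally Lipschitz and $\beta$ is bounded. The standard $L^2$ a-priori estimate for the forward SDE under Assumption \ref{assumption 1} gives $\sup_{t}\E[|X^u_t|^2]\le C$, hence $\sup_t\E[|\alpha(X^u_t)|^2]\le C$, and the analogous estimate for the adjoint BSDE gives $\sup_t\E[|Y^v_t|^2]\le C$. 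Lipschitz continuity of $\alpha,\gamma$ combined with Proposition \ref{continuous_Xbound} yields $\E[|\alpha(X^u_t)-\alpha(X^v_t)|^2]+\E[|\gamma(X^u_t)-\gamma(X^v_t)|^2]\le C\,\E[|X^u_t-X^v_t|^2]\le C\|u-v\|^2$. Squaring the pointwise bound on $|\E[\bar H(t,X^u_t,Y^u_t,Z^u_t)]-\E[\bar H(t,X^v_t,Y^v_t,Z^v_t)]|$, integrating over $[0,T]$, and using $\int_0^T\E[|Y^u_t-Y^v_t|^2]\,dt\le T\sup_t\E[|Y^u_t-Y^v_t|^2]$ together with Proposition \ref{continuous_YZbound} (which bounds both $\sup_t\E[|Y^u_t-Y^v_t|^2]$ and $\int_0^T\E[|Z^u_t-Z^v_t|^2]\,dt$ by $C\|u-v\|^2$) makes each of the four contributions $O(\|u-v\|^2)$, which is the claim.

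The main obstacle is the term $\alpha(X^u_t)(Y^u_t-Y^v_t)$ in the unbounded-$\alpha$ case: one needs an a-priori moment bound on $X^u_t$ that is uniform in $t$, and for the constant $C$ in the statement to be genuinely control-independent one must either restrict to a bounded set of controls containing the iterates or absorb that dependence into $C$ as done elsewhere in the paper. The remaining care is bookkeeping: the $Z$-difference must be kept in its integrated $L^2([0,T])$ form, since Proposition \ref{continuous_YZbound} controls $\int_0^T\E[|Z^u_t-Z^v_t|^2]\,dt$ but not a pointwise-in-$t$ bound, whereas the $Y$- and $X$-difference terms are handled through their uniform-in-$t$ bounds followed by a trivial integration.
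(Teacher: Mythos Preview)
Your proposal is correct and follows essentially the same route as the paper: an add--subtract decomposition of $\alpha(X^u_t)Y^u_t-\alpha(X^v_t)Y^v_t$, boundedness of $\beta$, Lipschitz continuity of $\alpha,\gamma$, Cauchy--Schwarz, and then Propositions~\ref{continuous_Xbound} and~\ref{continuous_YZbound} to close. Your extra care about the case where $\alpha$ is merely affine (hence unbounded) is in fact an improvement over the paper's own argument, which simply asserts ``$\alpha,\beta$ are both uniformly bounded'' and thereby strictly covers only the first drift form $b(x,u)=Ax+Bu+C$ of Assumption~\ref{Contraction_setup_assumption}.
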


The following result is important in understanding the sample-wise approximation of the solution of the FBSDE (decoupled) which essentially says that the conditional expectation of the sample-wise simulated $(Y_n,Z_n)$ is the solution of the BSDE under the classical numerical scheme. As such, we are generating unbiased estimates for the classical numerical solution of the BSDE. 
\begin{proposition}\label{prop_exp_equiv}
	Given the control $u^N \in \mathcal{U}^N $, the following relationships hold.
\begin{align}
\E_{t_n}[Y_n]&=Y^N_{t_n} \\
\E_{t_n}[Z_n]&=Z^N_{t_n} 
\end{align}
and so $\E[Y_n]=\E[Y^N_{t_n}], \  \E[Z_n]=\E[Z^N_{t_n}]$.
\end{proposition}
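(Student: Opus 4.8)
The plan is a backward induction on $n$, from $n=N$ down to $n=0$. The two schemes are driven by the same Brownian increments $\Delta W_n$ and use the identical forward Euler recursion \eqref{x_sch}, so $X_n = X^N_{t_n}$ holds pathwise and nothing is needed on the forward side; all the content lies in propagating the identity for the adjoint pair backward in time. The base case $n=N$ is immediate: $Y_N = g_x(X_N) = g_x(X^N_{t_N}) = Y^N_{t_N}$, and this is $\mathcal{F}_{t_N}$-measurable, so $\E_{t_N}[Y_N] = Y^N_{t_N}$.

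For the inductive step, assume $\E_{t_{n+1}}[Y_{n+1}] = Y^N_{t_{n+1}}$. I would first handle $Z_n$: by the tower property and the fact that $\Delta W_n$ is $\mathcal{F}_{t_{n+1}}$-measurable,
\[
\E_{t_n}[Z_n] = \frac{1}{h}\,\E_{t_n}\big[\E_{t_{n+1}}[Y_{n+1}\,\Delta W_n]\big] = \frac{1}{h}\,\E_{t_n}\big[\Delta W_n\,\E_{t_{n+1}}[Y_{n+1}]\big] = \frac{1}{h}\,\E_{t_n}[\Delta W_n\,Y^N_{t_{n+1}}] = Z^N_{t_n},
\]
which is exactly \eqref{classical_z}. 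For $Y_n$, take $\E_{t_n}$ of \eqref{y_sch}, written as $Y_n = (1 + h\,b'_x(t_n,X_n,u_n))Y_{n+1} + h\,f'_x(t_n,X_n,u_n)$; since $X_n = X^N_{t_n}$ is $\mathcal{F}_{t_n}$-measurable, $u_n$ is deterministic, and $b'_x, f'_x$ are bounded (Assumption~\ref{assumption 1}), the coefficients come out of $\E_{t_n}$, and $\E_{t_n}[Y_{n+1}] = \E_{t_n}\big[\E_{t_{n+1}}[Y_{n+1}]\big] = \E_{t_n}[Y^N_{t_{n+1}}]$ by the tower property and the induction hypothesis. Pulling the $\mathcal{F}_{t_n}$-measurable factors back inside recovers exactly the right-hand side of \eqref{classical_y}, so $\E_{t_n}[Y_n] = Y^N_{t_n}$, closing the induction.

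The last two assertions, $\E[Y_n] = \E[Y^N_{t_n}]$ and $\E[Z_n] = \E[Z^N_{t_n}]$, then follow from one more application of the tower property. The integrability that makes every conditional expectation and the interchange with $\Delta W_n$ legitimate follows from Assumption~\ref{assumption 1}: boundedness of $b'_x,\sigma'_x,f'_x$ together with the at-most-quadratic growth of $g$ and $\Delta W_n \in L^2$ give uniform $L^2$ control of $(X_n,Y_n)$ and an $L^1$ bound on $Y_{n+1}\Delta W_n$ via Cauchy--Schwarz.

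The one point that genuinely deserves care — and really the crux of the argument — is that the sample-wise $Y_{n+1}$ is \emph{not} $\mathcal{F}_{t_{n+1}}$-measurable: it is constructed backward from $Y_N = g_x(X_N)$ and therefore depends on the future increments $\Delta W_{n+1},\dots,\Delta W_{N-1}$. Hence one cannot treat it as ``known at time $t_{n+1}$'' and pull it out of conditional expectations directly; the whole scheme works only because conditioning down to $\mathcal{F}_{t_{n+1}}$ first, via the tower property, replaces the pathwise sample quantity by the (adapted) classical quantity $Y^N_{t_{n+1}}$ supplied by the induction hypothesis. I expect this to be the only step where a hasty argument would go wrong; the rest is bookkeeping.
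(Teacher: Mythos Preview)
Your proposal is correct and follows essentially the same approach as the paper: backward induction using the tower property with $\mathcal{F}_{t_{n+1}}$ to replace the non-adapted $Y_{n+1}$ by $\E_{t_{n+1}}[Y_{n+1}]=Y^N_{t_{n+1}}$, then identifying the result with the classical scheme \eqref{classical_y}--\eqref{classical_z}. The paper writes out the cases $n=N-1$ and $n=N-2$ explicitly and then says ``repeat recursively,'' whereas you state the induction cleanly and, commendably, flag the only genuine subtlety (the non-measurability of $Y_{n+1}$ with respect to $\mathcal{F}_{t_{n+1}}$), which the paper leaves implicit.
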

\begin{proof}
	Consider $n=N-1$ and recall that $Y^N_N(\cdot)=Y_N(\cdot)=g'_x(\cdot)$,
We start with $Z_n$,
\begin{align}
	\E_{t_{N-1}}[Z_{N-1}]&=\E[Y_N\Delta W_{N-1}/h|\mathcal{F}_{t_{N-1}}] \nonumber \\
	&=\E[g'_x(X_N)\Delta W_{N-1}/h|\mathcal{F}_{t_{N-1}}] \nonumber \\
	&=\E_{t_{N-1}}[Y^N_{t_N}\Delta W_{N-1}]/h  \\ 
	&=Z^N_{t_{N-1}}.
\end{align}

For $Y_{N-1}$, since $Y^N_{t_N}=g_x'(X_N)$ the following relationship follows trivially. 
\begin{align}
	\E_{t_{N-1}}[Y_{N-1}]&=\E_{t_{N-1}}[ Y_{N}+hb'_x(t_{N-1},X_{N-1},u^N_{t_{N-1}}) Y_N +hf'_x(t_{N-1},X_{N-1},u^N_{t_{N-1}})] \nonumber \\
	&= \E_{t_{N-1}}[g'_x(X_N) + hb_x(t_{N-1},X_{N-1},u^N_{t_{N-1}})g'_x(X_N)+hf_x(t_{N-1},X_{N-1},u^N_{t_{N-1}})] \\
	&=Y^N_{t_{N-1}}.
\end{align}
where the last equality is by \eqref{classical_y}. The case $n=N-2$ follows similarly: 
for the $Z$ term, we have
\begin{align}
	\E_{t_{N-2}}[Z_{N-2}]&=\E_{t_{N-2}}[Y_{N-1}\Delta W_{N-2}/h] \nonumber \\
	&=\E_{t_{N-2}}[\E_{t_{N-1}}[Y_{N-1}]\Delta W_{N-2}/h] \nonumber \\
	&=\E_{t_{N-2}}[Y^N_{t_{N-1}}\Delta W_{N-2}]/h  \\ 
	&=Z^N_{t_{N-2}}.
\end{align}
And for the $Y$ term, we have: 
\begin{align}
	\E_{t_{N-2}}[Y_{N-2}]&=\E_{t_{N-2}}[ Y_{N-1}+hb'_x(t_{N-2},X_{N-2},u^N_{t_{N-2}}) Y_{N-1} +hf'_x(t_{N-2},X_{N-2},u^N_{t_{N-2}})] \nonumber \\
	&= \E_{t_{N-2}}[\E_{t_{N-1}}[ Y_{N-1}]+ hb'_x(t_{N-2},X_{N-2},u^N_{t_{N-2}})\E_{t_{N-1}}[ Y_{N-1}]+hf'_x(t_{N-2},X_{N-2},u^N_{t_{N-2}})] \nonumber \\
        &= \E_{t_{N-2}}[ Y^N_{t_{N-1}} + hb'_x(t_{N-2},X_{N-2},u^N_{t_{N-2}}) Y^N_{t_{N-1}}+hf'_x(t_{N-2},X_{N-2},u^N_{t_{N-2}})] \\
	&=Y^N_{t_{N-2}}.
\end{align}
Hence, the conclusion is proved by repeating such argument recursively until $n=0$. 
\end{proof}
\subsection{Proof of 1st order convergence rate, the projection algorithm}
In this section, we prove the convergence of the proposed batch gradient descent algorithm. 
We first note that when the forward process $X_t$ is exact, the semi-discretization scheme:
\begin{align}
\begin{cases}\label{1_order_exact}
X_{t_{n+1}}&=X_{t_n} + \int_{t_{n}}^{t_{n+1}} b(X_t, u_t) dt +\int_{t_{n}}^{t_{n+1}} \sigma(u_t) d W_t \\ 
\hat{Y}^{N}_{t_n}&=\E_{t_n} \big[ \hat{Y}^{N}_{t_{n+1}}+h \big( b'_x(X_{t_n}, u_{t_n}) \hat{Y}^{N}_{t_{n}}  + f'_x(X_{t_n}, u_{t_n}) \big) \big], \ \ \hat{Y}_T = g_x(X_T)  \\
\hat{Z}^{N}_{t_n}&= \E_{t_n} \big[\frac{\hat{Y}^{N}_{t_{n+1}}\Delta W_n}{h} \big]    
\end{cases}
\end{align}
will have first order error convergence rate in the following sense: 
\begin{equation}\label{semi_time}
    \max_{0 \leq n\leq N} \E[\sup_{t_n \leq t \leq t_{n+1}}|Y_t-\hat{Y}^N_{t_n}|^2] + \sum^{N-1}_{n=0} \E[\int^{t_{n+1}}_{t_n} |Z_t - \hat{Z}^N_{t_{n}}|^2 dt ] \leq C h^2 
\end{equation}
for some constant $C$ that depends only on the initial $X_0$ and the data $b,\sigma, f$ (under sufficient smoothness assumptions) see \cite{chessari}, Section 3.1 and \cite{Gobet} . 
Based on Algorithm \ref{algorithm_batch_sample}, we note that the forward process can be replaced with higher order discretization schemes for instance, see Table 7.1 in \cite{Weinan1}. 

Now, consider the following set of discretized (decoupled) FBSDE: 
\begin{align}
\begin{cases}\label{1_order_approx}
X^N_{t_{n+1}}&= \Psi(X^N_{t_n}, u_{t_n}, \Delta W_{t_n}) \\ 
Y^{N}_{t_n}&=\E_{t_n} \big[ Y^{N}_{t_{n+1}}+h \big(  b'_x(X^N_{t_n}, u_{t_n}) Y^{N}_{t_{n}}  + f'_x(X^N_{t_n}, u_{t_n}) \big) \big], \ \ Y^N_T = g_x(X^N_T)  \\
Z^{N}_{t_n}&= \E_{t_n} \big[\frac{Y^{N}_{t_{n+1}}\Delta W_{t_n}}{h} \big]  
\end{cases}
\end{align}
where $\Psi: \bR^d \times \mathbb{R}^k \times \mathbb{R}^b \rightarrow \bR^d$ is a higher order numerical scheme for $X_t$ of order $\Theta$, we want to study the difference between $(\hat{Y}^{N}_{t_n}, \hat{Z}^{N}_{t_n})$ and $(Y^{N}_{t_n},Z^{N}_{t_n})$.  We note that in both \eqref{1_order_exact} and \eqref{1_order_approx}, implicit form is used for simplicity of analysis. We comment that both the implicit and explicit schemes have the same order of convergence when $h$ is taken small enough, see also Section 5.3, Remark 5.3.2 in \cite{Jianfeng}.  
\begin{lemma}\label{first_order_FBSDE}
Let $\Psi$ defined above be a scheme of order $\Theta$, assume that there exists constant $C$ such that $\sqrt{\E[|X^N_{t_n}-X_{t_n}|^4]} \leq C \E[|X^N_{t_n}-X_{t_n}|^2]$, then we have
\begin{equation}\label{high_order_discretization}
    \max_{0 \leq n\leq N} \E[\sup_{t_n \leq t \leq t_{n+1}}|Y_t-Y^N_{t_n}|^2] + \sum^{N-1}_{n=0} \E[\int^{t_{n+1}}_{t_n} |Z_t - Z^N_{t_{n}}|^2 dt ] \leq C h^{2}.
\end{equation}
provided $2\Theta \geq 3$. 
\end{lemma}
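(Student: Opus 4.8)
The plan is to compare the two semi-discretizations \eqref{1_order_exact} and \eqref{1_order_approx} and then invoke \eqref{semi_time}, so that the desired bound \eqref{high_order_discretization} follows from the triangle inequality once I control
\[
\max_{0 \leq n \leq N} \E[|\hat{Y}^N_{t_n} - Y^N_{t_n}|^2] + \sum_{n=0}^{N-1} h\, \E[|\hat{Z}^N_{t_n} - Z^N_{t_n}|^2] \leq C h^2.
\]
Both systems are driven by the \emph{same} Brownian increments; they differ only in that the forward state is the exact $X_{t_n}$ in \eqref{1_order_exact} versus the order-$\Theta$ approximation $X^N_{t_n}$ in \eqref{1_order_approx}. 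The scheme order gives $\E[|X^N_{t_n} - X_{t_n}|^2] \leq C h^{2\Theta}$, and together with the hypothesis $\sqrt{\E[|X^N_{t_n}-X_{t_n}|^4]} \leq C\,\E[|X^N_{t_n}-X_{t_n}|^2]$ this also yields a fourth-moment bound of order $h^{4\Theta}$. These are the only inputs about $\Psi$ I will use.

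First I would set $\delta Y_n := \hat{Y}^N_{t_n} - Y^N_{t_n}$, $\delta Z_n := \hat{Z}^N_{t_n} - Z^N_{t_n}$, and subtract the two backward recursions. Using boundedness and the Lipschitz property of $b'_x, f'_x$ from Assumption \ref{assumption 1}, the discrepancy in the drivers splits into a part proportional to $|\delta Y|$ (through $b'_x(X_{t_n},u_{t_n})\hat Y - b'_x(X^N_{t_n},u_{t_n})Y^N$, handled by adding and subtracting $b'_x(X^N_{t_n},u_{t_n})\hat Y$) and a forcing term proportional to $|X_{t_n} - X^N_{t_n}|$ times $|\hat Y^N|$ (plus the $f'_x$ Lipschitz contribution). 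For the $Z$ difference, $\delta Z_n = \E_{t_n}[\delta Y_{n+1} \Delta W_n/h]$, so Cauchy--Schwarz in the form $\E[|\delta Z_n|^2] \le \E[|\delta Y_{n+1} - \E_{t_n}\delta Y_{n+1}|^2]/h$ gives the usual gain of one power of $h$. Then I would run the standard discrete Gronwall argument: take conditional expectations, apply the elementary inequality $(a+b)^2 \le (1+h)a^2 + (1+1/h)b^2$ to absorb the $|\delta Y|$ terms with a constant multiplying $h$, sum from $n$ to $N-1$, use $\E_{t_n}[Y^N_N] = \E_{t_n}[\hat Y^N_N]$ (same terminal data $g_x$) so there is no terminal mismatch, and conclude
\[
\max_{0\le n\le N}\E[|\delta Y_n|^2] + h\sum_{n=0}^{N-1}\E[|\delta Z_n|^2] \le C\, h\sum_{n=0}^{N-1}\E\big[|X_{t_n}-X^N_{t_n}|^2\,(1+|\hat Y^N_{t_n}|^2)\big].
\]
Here I need the $a\,priori$ bound $\sup_n \E[|\hat Y^N_{t_n}|^4] \le C$ (from boundedness of $b'_x$, quadratic growth of $g$, and moment bounds on $X_t$), combined with the fourth-moment hypothesis on $X^N - X$ via Cauchy--Schwarz, to turn the right-hand side into $C\, h \cdot N \cdot h^{2\Theta} = C\, h^{2\Theta}$. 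Since the hypothesis is $2\Theta \ge 3$, i.e. $h^{2\Theta} \le h^3 \le h^2$ is not quite what I want — rather $h\sum_{n} h^{2\Theta} = h^{2\Theta}$, and $2\Theta \ge 3 > 2$ gives $h^{2\Theta} \le h^2$ for $h \le 1$; combined with \eqref{semi_time} the full estimate \eqref{high_order_discretization} follows.

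The main obstacle I anticipate is bookkeeping the moment estimates cleanly: the forcing term genuinely involves a \emph{product} $|X_{t_n}-X^N_{t_n}|\cdot|\hat Y^N_{t_n}|$ (and similar with $f'_x$, whose Lipschitz constant multiplies $|X-X^N|$ directly), so one cannot avoid a higher-than-second moment on one factor, which is exactly why the somewhat unusual hypothesis $\sqrt{\E[|X^N-X|^4]}\le C\,\E[|X^N-X|^2]$ and the implicit fourth-moment bound on $\hat Y^N$ are needed. A secondary subtlety is that the $Z$-error contributes a term $h^{-1}\E[|X_{t_{n+1}}-X^N_{t_{n+1}}|^2\Delta W_n^2]$-type quantity through $\delta Z$; one must check that the extra $\Delta W_n$ does not destroy the power counting — it does not, because $\E_{t_n}[\cdot\,\Delta W_n]$ annihilates the $\mathcal F_{t_n}$-measurable leading part and the remainder again gains a factor $h$. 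Everything else is the routine decoupled-FBSDE Gronwall estimate, so I would not belabor it.
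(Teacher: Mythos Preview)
Your approach is essentially the same as the paper's: compare the two semi-discretizations \eqref{1_order_exact} and \eqref{1_order_approx} by a perturbation/discrete Gronwall argument, bound $\max_n \E[|\delta Y_n|^2]$ and $h\sum_n \E[|\delta Z_n|^2]$ by $Ch^{2\Theta}$, and conclude via \eqref{semi_time} and the triangle inequality. The paper packages the $Z$-estimate through the martingale representation (introducing auxiliary processes $\hat{\tilde Z}_s,\tilde Z_s$ so that $\E[|\delta Y_{n+1}-\E_{t_n}\delta Y_{n+1}|^2]=\E[\int_{t_n}^{t_{n+1}}|\Delta\tilde Z_s|^2\,ds]$), but your direct use of $\delta Z_n=\E_{t_n}[\delta Y_{n+1}\Delta W_n]/h$ together with conditional Cauchy--Schwarz is an equivalent bookkeeping.

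There is one slip: you write that ``$\E_{t_n}[Y^N_N] = \E_{t_n}[\hat Y^N_N]$ (same terminal data $g_x$) so there is no terminal mismatch''. This is not correct: the terminal values are $g_x(X_T)$ and $g_x(X^N_T)$ respectively, applied to \emph{different} forward states, so $\delta Y_N\neq 0$ in general. The fix is immediate --- Lipschitz continuity of $g_x$ gives $\E[|\delta Y_N|^2]\le C\,\E[|X_T-X^N_T|^2]\le Ch^{2\Theta}$, which is absorbed into the Gronwall constant --- but you should state it rather than claim the mismatch vanishes.
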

\begin{proof}
    By definition of $\hat{Y}^{N}_{t_n}$ and $Y^{N}_{t_n}$, we have that by the Martingale Representation theorem, there exists square integrable process $\hat{\tilde{Z}}_s,\tilde{Z}_s$ such that 
    \begin{align}\label{mart_part}
        \begin{cases}
           \hat{Y}^{N}_{t_{n+1}} &=\hat{Y}^{N}_{t_n}- h \partial_{x} H(t_n, X_{t_n}, \hat{Y}^N_{t_{n}},\hat{Z}^N_{t_{n}}, u_{t_n} ) + \int^{t_{n+1}}_{t_n} \hat{\tilde{Z}}_s dW_s \nonumber\\ 
            Y^{N}_{t_{n+1}}&=Y^{N}_{t_n}-h \partial_{x} H(t_n, X^N_{t_n}, Y^N_{t_{n}},Z^N_{t_{n}} , u_{t_n} )+\int^{t_{n+1}}_{t_n} \tilde{Z}_s dW_s \\
        \end{cases}
    \end{align}
We take the difference between the two equations above and denote $\Delta Y^N_{t_{n+1}}:=\hat{Y}^{N}_{t_{n+1}}-Y^{N}_{t_{n+1}}$, $\Delta Z^N_{t_{n+1}}:=\hat{Z}^{N}_{t_{n+1}}-Z^{N}_{t_{n+1}}$ and $\Delta \tilde{Z}_{s}:=\tilde{\hat{Z}}_{s}-\tilde{Z}_{s}$, we get: 
\begin{align}
    \Delta Y^N_{t_{n}} + \int^{t_{n+1}}_{t_n} \Delta \tilde{Z}_{s} dW_s = Y^N_{t_{n+1}} + h \Delta \partial_x H 
\end{align}
square both sides and take expectation, note the independence between $\Delta Y^N_{t_{n}}$ and $Z^N_{s}$ we have: 
\begin{align}
    \E \Big[ |\Delta Y^N_{t_{n}}|^2 + \int^{t_{n+1}}_{t_n} |\Delta \tilde{Z}_{s}|^2 ds \Big] & \leq (1+h)\E[|Y^N_{t_{n+1}}|^2]+ (1+\frac{1}{h})\E[ |\Delta \partial_x H|^2] h^2
\end{align}
Note that 
\begin{align}
    \E[ |\Delta \partial_x H|^2]& = \E \Big[ \Big| b'_x(X^N_{t_n},u_{t_n})Y^N_{t_{n}}-b'_x(X^N_{t_n},u_{t_n})\hat{Y}^N_{t_{n}}+b'_x(X^N_{t_n},u_{t_n})\hat{Y}^N_{t_{n}}-b'_x(X_{t_n},u_{t_n})\hat{Y}^N_{t_{n}}  \nonumber\\ 
    & + f'_x(X^N_{t_n},u_{t_n}) - f'_x(X_{t_n},u_{t_n}) \Big|^2 \Big] \nonumber\\
    & \leq C\E[|\Delta Y^N_{t_{n}}|^2] + C\E[|\hat{Y}^N_{t_{n}}|^2|X^N_{t_n}-X_{t_n}|^2] + C \E[|X^N_{t_n}-X_{t_n}|^2] \nonumber \\ 
    & \leq C\E[|\Delta Y^N_{t_{n}}|^2] + C\sqrt{\E[|\hat{Y}^N_{t_{n}}|^4] \E[|X^N_{t_n}-X_{t_n}|^4]} + C \E[|X^N_{t_n}-X_{t_n}|^2] \nonumber \\ 
    & \leq C\E[|\Delta Y^N_{t_{n}}|^2] + C h^{2\Theta} 
\end{align}
Then, we have that for $h$ small enough, 
\begin{align}\label{square_compare}
    \E \Big[ |\Delta Y^N_{t_{n}}|^2 + \int^{t_{n+1}}_{t_n} |\Delta \tilde{Z}_{s}|^2 ds \Big] & \leq (1+Ch)\E[|Y^N_{t_{n+1}}|^2]+ C h^{2\Theta} h
\end{align}
By discrete Gronwall's inequality,  
\begin{align}\label{Y_square_compare}
    \sup_{0 \leq n \leq N} \E[|\Delta Y^N_{t_n}|^2] \leq C h^{2\Theta} .
\end{align}
In the meantime, in \eqref{mart_part}, we multiply both sides by $\Delta W_{t_n}$, take expectation and use It\^{o} isometry
\begin{align}
\begin{cases}
    \hat{Z}^N_{t_n} &= \E_{t_n}[ \int^{t_{n+1}}_{t_n} \hat{\tilde{Z}}_s ds ]/ h \nonumber \\
    Z^N_{t_n} &= \E_{t_n}[ \int^{t_{n+1}}_{t_n} \tilde{Z}_s ds] / h 
\end{cases}
\end{align}
We then take difference between the two equations above, square both sides and then take expectation to get 
\begin{align}
    \E[|\hat{Z}^N_{t_n}-Z^N_{t_n}|^2] & \leq \frac{1}{h}\E[ \int^{t_{n+1}}_{t_n} |\Delta \tilde{Z}_s|^2 ds] \leq h^{2\Theta -1}
\end{align}
where the last inequality comes from \eqref{square_compare} and \eqref{Y_square_compare}. 
This combined with \eqref{semi_time} proves \eqref{high_order_discretization}.  
\end{proof}

\begin{lemma}{\label{ea1}} 
Under Assumptions \ref{assumption 1}, and that $Y^{u^N}_t, Z^{u^N}_{t} \in C_b^{1,2}$, 
then the following holds: there exist some constant $C \geq 0$ such that for 
\begin{equation}
	\epsilon^2_N:=\sup_{u^N \in \mathcal{U}^N} ||J'(u^N)-J'_N(u^N)||_2^2 
\end{equation}
we have  
	\begin{equation}
	\epsilon^2_N  \leq  C h^2. 
\end{equation}
\end{lemma}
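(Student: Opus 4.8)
The plan is to expand the squared $L^2([0,T])$-norm over the temporal mesh and, on each sub-interval $[t_n,t_{n+1})$ on which the piecewise-constant control $u^N$ equals $a_n$, to split the error into a \emph{time-regularity} part and an \emph{FBSDE-discretization} part. Since $J'(u^N)(t)=\E[b'_u(t,X_t,a_n)Y_t+\sigma'_u(t,a_n)Z_t+f'_u(t,X_t,a_n)]$ for $t\in[t_n,t_{n+1})$, while $J'_N(u^N)$ is the piecewise-constant function equal to $\E[b'_u(t_n,X^N_{t_n},a_n)Y^N_{t_n}+\sigma'_u(t_n,a_n)Z^N_{t_n}+f'_u(t_n,X^N_{t_n},a_n)]$ there, with $(X^N,Y^N,Z^N)$ the scheme \eqref{1_order_approx}, the elementary inequality $|a+b|^2\le2|a|^2+2|b|^2$ gives $\epsilon_N^2\le 2A_N+2B_N$ where $A_N:=\sup_{u^N\in\mathcal{U}^N}\sum_n\int_{t_n}^{t_{n+1}}|J'(u^N)(t)-J'(u^N)(t_n)|^2\,dt$ and $B_N:=\sup_{u^N\in\mathcal{U}^N}h\sum_n|J'(u^N)(t_n)-J'_N(u^N)(t_n)|^2$. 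I will show that each of $A_N$, $B_N$ is $O(h^2)$.

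For $A_N$ I would use the standing hypothesis $Y^{u^N}_t=\theta(t,X_t)$, $Z^{u^N}_t=\zeta(t,X_t)$ with $\theta,\zeta\in C_b^{1,2}$: on $[t_n,t_{n+1})$ the integrand equals $\E[\varphi(t,X_t)]$ with $\varphi(t,x):=b'_u(t,x,a_n)\theta(t,x)+\sigma'_u(t,a_n)\zeta(t,x)+f'_u(t,x,a_n)$, which, by the smoothness and boundedness in Assumption \ref{assumption 1}, is $C_b^{1,2}$ with bounds uniform in $(n,a_n)$. It\^o's formula (equivalently the Kolmogorov equation for the generator $\mathcal{L}^{a_n}$ of $X$ on the interval) yields $|J'(u^N)(t)-J'(u^N)(t_n)|=\big|\int_{t_n}^t\E[(\partial_s+\mathcal{L}^{a_n})\varphi(s,X_s)]\,ds\big|\le C(t-t_n)$, the constant $C$ depending only on the $C_b^{1,2}$-bounds, the constants of Assumption \ref{assumption 1}, and $\sup_t\E[1+|X_t|^2]$. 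Hence $\int_{t_n}^{t_{n+1}}|J'(u^N)(t)-J'(u^N)(t_n)|^2\,dt\le Ch^3$, and summing over the $N=T/h$ intervals gives $A_N\le CTh^2$. What makes this work is that, although $t\mapsto Z_t$ is only $\tfrac12$-H\"older pathwise, $t\mapsto\E[Z_t]$ is Lipschitz.

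For $B_N$ I would bound $|J'(u^N)(t_n)-J'_N(u^N)(t_n)|$ by the three pieces coming from $b'_u,\sigma'_u,f'_u$. For the $b'_u$ and $f'_u$ pieces, the boundedness and Lipschitz properties of the coefficients (Assumption \ref{assumption 1}) together with $\sup_n\E[|Y_{t_n}|^2]\le C$ (from $\theta\in C_b^{1,2}$) yield a bound $C\sqrt{\E[|X_{t_n}-X^N_{t_n}|^2]}+C\sqrt{\E[|Y_{t_n}-Y^N_{t_n}|^2]}$, and by Lemma \ref{first_order_FBSDE}, applied with a forward scheme $\Psi$ of order $\Theta\ge\tfrac32$ (so that $2\Theta\ge3$), both square roots are $O(h)$; their contribution to $B_N$ is therefore $O(h^2)$. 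The delicate piece is $|\sigma'_u(t_n,a_n)|\,|\E[Z_{t_n}]-\E[Z^N_{t_n}]|$, since no pointwise estimate $\E[|Z_{t_n}-Z^N_{t_n}|^2]=O(h^2)$ is available. Instead I would write $\E[Z_{t_n}]-\E[Z^N_{t_n}]=\big(\E[Z_{t_n}]-\tfrac1h\int_{t_n}^{t_{n+1}}\E[Z_t]\,dt\big)+\tfrac1h\int_{t_n}^{t_{n+1}}\E[Z_t-Z^N_{t_n}]\,dt$; the first bracket is $O(h)$ by the Lipschitz-in-$t$ estimate on $\E[Z_\cdot]$ from the $A_N$ step, while for the second, Cauchy--Schwarz gives $h\,\big|\tfrac1h\int_{t_n}^{t_{n+1}}\E[Z_t-Z^N_{t_n}]\,dt\big|^2\le\int_{t_n}^{t_{n+1}}\E[|Z_t-Z^N_{t_n}|^2]\,dt$, whose sum over $n$ is exactly the $Z$-term controlled by Lemma \ref{first_order_FBSDE} and hence $\le Ch^2$. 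Combining the pieces, $B_N\le Ch^2$.

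The main obstacle is precisely this last estimate: the numerical $Z^N_{t_n}$ approximates the time-average $\tfrac1h\E_{t_n}[\int_{t_n}^{t_{n+1}}Z_s\,ds]$ rather than the pointwise value $Z_{t_n}$, so the naive pointwise $Z$-error decays only like $h$ and a careless argument would produce $\epsilon_N^2=O(h)$ instead of $O(h^2)$; one must route it through the integrated ($L^2$-in-time) estimate of Lemma \ref{first_order_FBSDE} as above, or equivalently reformulate $J'$ using the averaged $Z$. Finally, uniformity over $u^N\in\mathcal{U}^N$ is harmless, since every constant appearing above originates from Assumption \ref{assumption 1}, the (assumed uniform) $C_b^{1,2}$-bounds on $(\theta,\zeta)$, Propositions \ref{stab_x}--\ref{stab_yz}, and Lemma \ref{first_order_FBSDE}, all of which are control-independent.
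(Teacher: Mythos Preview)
Your proposal is correct and follows the same two-part decomposition as the paper: split $\epsilon_N^2$ into a time-regularity piece (your $A_N$) and a discretization piece (your $B_N$), bound the first via boundedness of $\frac{d}{dr}\E[\varphi(r,X_r)]$ (the paper phrases this identically), and bound the second via the higher-order forward scheme together with Lemma~\ref{first_order_FBSDE}. The only substantive difference is your explicit handling of the $Z$-contribution in $B_N$: you route $\E[Z_{t_n}]-\E[Z^N_{t_n}]$ through the time-averaged estimate of Lemma~\ref{first_order_FBSDE}, whereas the paper simply invokes that lemma without spelling this out---under the standing hypothesis $Z^{u^N}\in C_b^{1,2}$ one can in fact get the pointwise bound $\E[|Z_{t_n}-\hat Z^N_{t_n}|^2]=O(h^2)$ directly (since $\hat Z^N_{t_n}=\tfrac1h\E_{t_n}\int_{t_n}^{t_{n+1}}Z_s\,ds$ and $s\mapsto\E_{t_n}[Z_s]$ is Lipschitz by It\^o), so both routes close.
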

\begin{proof}
	To simplify the notations, we make the following definitions: 
	\begin{align}
	\phi |_{t=t_n}&= f'_u(X_{t_n},u^N_{t_n})^T Y_{t_n}+g'_u(X_{t_n},u^N_{t_n})^T Z_{t_n}+r'_u(X_{t_n},u^N_{t_n})\\
		\phi_n 		& = f'_u(X^N_{t_n},u^N_{t_n})^TY^{N}_{t_n}+g'_u(X^N_{t_n},u^N_{t_n})^TZ^{N}_{t_n} +r'_u(X^N_{t_n},u^N_{t_n})
	\end{align}
	That is, $\phi_n$ is the discrete dynamics where all the components are obtained from discretization; whereas $\phi_{t_n}$ results from the continuous dynamics using the control $u^N$ which is piecewise constant.
	Hence, by defining $\bar{\phi}(t,x_t):=\phi(t,x_t)$ on the interval $[t_n,t_{n+1})$,
	we then have: 
	\begin{align}
		\int^T_0 \ (J'(u^N)\big|_{t}  - J'_N(u^N) \big|_{t}  \ )^2 dt 
		&\leq2\sum^{N-1}_{n=0} \int^{t_{n+1}}_{t_n} (J'(u^N)|_t - J'(u^N)|_{t_n})^2+ (J'(u^N)|_{t_n}-J_N'(u^N)|_{t_n})^2 dt \nonumber \\ 
		 &= 2\int_0^T \E[ \phi_t-\phi_{t_n}]^2 + \E[\phi_{t_n} - \phi_{n} ]^2 dt  \nonumber \\
		& \leq C \sum^{N-1}_{n=0} \int^{t_{n+1}}_{t_n} (\int^t_{t_n}  \frac{d}{dr} \E[\bar{\phi}_r] \Big \rvert_{r=s} ds  )^2 dt +C h \sum^{N-1}_{n=0} (\E[\phi_{t_n}]-\E[\phi_{n}])^2 \label{error_part} \\
		& \leq \mathcal{O}(h^2)\label{error_s1}
\end{align}
where from \eqref{error_part} to \eqref{error_s1}, we used the fact that $\frac{d}{dr} \E[\bar{\phi}_r] \Big \rvert_{r=s}$ is uniformly bounded, the higher order error between the solution of SDEs and its numerical approximation and Lemma \ref{first_order_FBSDE}.
\end{proof}

We also show an estimate on the gradient of the loss function. Assume $u$ defined on $[0,T]$ is continuous. 
Let $\bar{u}^N = \text{argmin}_{u \in \mathcal{U}^N} ||u-u^*||_2$ which is the projection of the true optimal control onto the space $\mathcal{U}^N$. We have, by recalling $J'(u^*)=0$, the following set of inequalities: 
\begin{align}
    J(\bar{u}^N)-J(u^*) &= \int^1_0 \frac{d}{d \epsilon} J(u^* + \epsilon (\bar{u}^N- u^*))d \epsilon \ \ \nonumber\\
    &=  \int^1_0 \langle J'_u(u^* + \epsilon (\bar{u}^N- u^*)) - J'(u^*),\bar{u}^N- u^* \rangle   d \epsilon \nonumber \\
    & \leq \int^1_0 ||\bar{u}^N- u^*||_2 ||\bar{u}^N- u^*|| d \epsilon \nonumber\\
    & \leq C||\bar{u}^N- u^*||^2_2 \leq C \frac{1}{N^2}
\end{align}
In the meantime, let $u^{*,N} \in \mathcal{U}^N$ be the optimal control of the problem in the space $\mathcal{U}^N$, we then have
\begin{align}
    J(u^{*,N})-J(u^*) & \leq J(\bar{u}^N)-J(u^*) \nonumber\\
    &\leq C \frac{1}{N^2}
\end{align}
By the strong convexity assumption, we have by using $J'(u^*)=0$,
\begin{align}
    \langle J'(u^*), u^{*,N} -u^* \rangle + \frac{\lambda}{2} ||u^{*,N} -u^*||^2 &\leq  J(u^{*,N})-J(u^*) \nonumber \\ 
    &\Rightarrow  ||u^{*,N} -u^*||^2 \leq C \frac{1}{N^2}. \label{projection_ustar_estimate}
\end{align}
We then further have 
\begin{align}
||J'(u^{*,N})-J'(u^*)||^2 &\leq C ||u^{*,N}-u^{*}||^2 \nonumber \\
&\Rightarrow  ||J'(u^{*,N})||^2 \leq C \frac{1}{N^2}
\end{align}
We now define the information set to be the following: 
we let $\mathcal{G}_{k-1}:=\sigma\Big( \Big \lbrace u_0, \lbrace W^i_n,  X^i_n,Y^i_n,Z^i_n\rbrace^{N}_{n}  \Big  \rbrace^{k-1,M}_{l=0,i=1}\Big)$, that is the $\sigma-$algebra generated by the initial control $u_0$ and the $M$ independent paths $\lbrace X^i_n,Y^i_n,Z^i_n \rbrace^{M,N-1}_{i,n}$. Also note that $u^k$ is $\mathcal{G}_{k-1}$ measurable. We use $\E^K[ \cdot ]$ to denote $\E[\cdot | \mathcal{G}_K]$.

\begin{theorem}{\label{standard_new}}
Assume that Assumption \ref{assumption 1} and Assumption \ref{strong_convexity} hold. 
Take $\eta_k= \theta/(k+M)$ for some $\theta,M$ such that $\lambda \theta - 4C_L\theta^2/(1+M)>2$. Let $\lbrace u^k \rbrace$ be defined as in Algorithm \ref{algorithm_batch_sample}, 3-(ii).  Then for large K, the following inequality holds: 
	\begin{equation} \label{model_error_cvx}
		\E[|| u^{K+1}-u^{*,N}||^2_2] \leq C ( \frac{1}{K} + N^{-2} )
	\end{equation}
\end{theorem}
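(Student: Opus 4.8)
\section*{Proof proposal for Theorem \ref{standard_new}}

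The plan is to run the classical Robbins--Monro argument for strongly convex objectives, but fed by three facts specific to this setup: the batch gradient is an unbiased estimate of the numerical gradient $J'_N(u^k)$ (Proposition \ref{prop_exp_equiv}), the numerical gradient differs from the true gradient by only $\mathcal{O}(h)$ (Lemma \ref{ea1}), and averaging over $M$ independent trajectories cuts the (large) variance of the sample estimator by $1/M$. Set $a_k := \E[\|u^k - u^{*,N}\|_2^2]$ and abbreviate the batch estimator by $G_k := \overline{(j'_u)}^{k+1}$, so that the SGD step reads $u^{k+1} = u^k - \eta_k G_k$. Since $u^k$ and $u^{*,N}$ are $\mathcal{G}_{k-1}$-measurable, expanding the square and conditioning on $\mathcal{G}_{k-1}$ gives
\[
\E^{k-1}\!\big[\|u^{k+1}-u^{*,N}\|_2^2\big] = \|u^k-u^{*,N}\|_2^2 - 2\eta_k \big\langle \E^{k-1}[G_k],\, u^k-u^{*,N}\big\rangle + \eta_k^2\, \E^{k-1}\!\big[\|G_k\|_2^2\big].
\]
By Proposition \ref{prop_exp_equiv}, each summand $\partial_u H(t_n, X^{u^k,i}_n, Y^{u^k,i}_n, Z^{u^k,i}_n, u^k_n)$ has conditional mean equal to $J'_N(u^k)$ evaluated at $t_n$, hence $\E^{k-1}[G_k] = J'_N(u^k)$.

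For the cross term I would write $J'_N(u^k) = \big(J'(u^k)-J'(u^{*,N})\big) + J'(u^{*,N}) + \big(J'_N(u^k)-J'(u^k)\big)$. Assumption \ref{strong_convexity} bounds the first bracket below by $\lambda\|u^k-u^{*,N}\|_2^2$; the term with $J'(u^{*,N})$ is controlled via Cauchy--Schwarz using the a priori estimate $\|J'(u^{*,N})\|_2^2 \leq C N^{-2}$ obtained just before the theorem from \eqref{projection_ustar_estimate}; and the last bracket is bounded by $\epsilon_N\|u^k-u^{*,N}\|_2 \leq C h\,\|u^k-u^{*,N}\|_2$ by Lemma \ref{ea1}. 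A Young inequality then absorbs the two $\|u^k-u^{*,N}\|_2$-type contributions into a fraction of $\lambda\eta_k\|u^k-u^{*,N}\|_2^2$, leaving an additive error $C\eta_k h^2 \sim C\eta_k N^{-2}$.

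The delicate term is $\E^{k-1}[\|G_k\|_2^2] = \|J'_N(u^k)\|_2^2 + \E^{k-1}\big[\|G_k - J'_N(u^k)\|_2^2\big]$. For the first piece, Lipschitz continuity of $b'_u,\sigma'_u,f'_u$, the stability estimates Propositions \ref{stab_x}--\ref{stab_yz}, the closeness of $J'_N$ to $J'$, and $\|J'(u^{*,N})\|_2^2\le CN^{-2}$ give $\|J'_N(u^k)\|_2^2 \leq C_L\big(\|u^k-u^{*,N}\|_2^2 + N^{-2}\big)$ with $C_L = \mathcal{O}(1)$. The second piece, by independence of the $M$ paths, is $M^{-1}$ times the single-sample conditional variance; since $Z_n = Y_{n+1}\Delta W_n / h$ has second moment of order $h^{-1}=\mathcal{O}(N)$, the single-sample estimator has squared norm of order $N$, so this contributes $\mathcal{O}(N/M)$ --- exactly the factor $N$ that the batch is there to buy back relative to \cite{Hui1}, and which is $\mathcal{O}(1)$ once $M$ is taken proportional to $N$. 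Collecting the three terms and taking full expectations yields the one-step recursion
\[
a_{k+1} \leq \big(1 - \lambda\eta_k + 4C_L\eta_k^2\big)\, a_k + C\eta_k^2 + C\eta_k N^{-2}.
\]

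Finally, substituting $\eta_k = \theta/(k+M)$ and using $4C_L\eta_k^2 \leq 4C_L\theta^2/\big((1+M)(k+M)\big)$ (for $k\ge 1$; the first step is absorbed into the constant), the effective contraction coefficient $c := \lambda\theta - 4C_L\theta^2/(1+M)$ satisfies $c > 2$ by hypothesis. I would then close the argument by induction on $k$: posit $a_k \leq A/(k+M) + B N^{-2}$ and verify propagation, with $B \sim C\theta T^2/c$ chosen to absorb the $C\eta_k N^{-2}$ forcing (a fixed-point level, independent of $K$ and $N$) and $A$ chosen large relative to $C\theta^2$ and the initial data to absorb the $C\eta_k^2$ forcing (which needs $c>1$; here $c>2$ gives slack). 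Evaluating at $k = K+1$ gives $\E[\|u^{K+1}-u^{*,N}\|_2^2] \leq C(1/K + N^{-2})$. The main obstacle I anticipate is the second-moment bound on $G_k$: quantifying the fluctuation of the $Z$-term cleanly, justifying the $1/M$ reduction, and checking that the residual $\mathcal{O}(N/M)$ is benign under the choice of $M$ against $N$ that is implicit in the step-size condition; the subsequent induction is routine once the constants are aligned, although the step-size hypothesis must be used precisely as stated to obtain the $1/K$ (rather than a slower) rate.
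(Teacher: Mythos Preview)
Your proposal is correct and follows essentially the same route as the paper: unbiasedness of the batch estimator (Proposition~\ref{prop_exp_equiv}) for the cross term, the $\epsilon_N=\mathcal{O}(h)$ gap from Lemma~\ref{ea1}, strong convexity, the $\mathcal{O}(N/M)$ variance bound on the $Z$-driven fluctuation, and then iteration of the resulting one-step recursion under the step-size choice $\eta_k=\theta/(k+M)$. The only cosmetic differences are that the paper isolates the $J'(u^{*,N})$ contribution up front via a Young-$\epsilon$ split (later choosing $\epsilon=c\eta'_K$) rather than handling it inside the cross term, and closes by direct iteration of the recursion rather than an induction ansatz; both yield the same recursion $a_{k+1}\le(1-c\eta'_k)a_k+C\eta'^2_k+C\eta'_k N^{-2}$ and the same conclusion.
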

\begin{remark}
    The main idea of the proof is similar to that of Theorem 3.8 from \cite{Hui1}, and we present the major argument where the convergence rate was improved. 
\end{remark}
\begin{proof}
    Consider the following two equations:
\begin{align}
	u^{*,N} & =\mathcal{P}_{K_N} (u^{*,N} -\eta_k  J'(u^{*,N}) +\eta_k  J'(u^{*,N}) \label{opt1}) \\
		u^{K+1} &= \mathcal{P}_{K_N}(u^K- \eta_k \overline{j}'(u^K)) \label{opt2}
	\end{align}
    Take the difference, square both sides, use Young's inequality with $\epsilon$ and take conditional expectation: 
    \begin{align}
        \E^K[|| u^{K+1}-u^{*,N}||^2_2] ]&\leq (1+\epsilon ) \E^K[ ||(u^K-u^{*,N})-\eta_K (\bar{j}'(u^{K})-J'(u^{*,N})) ||^2 ]  + (1+1/\epsilon ) \eta_K^2 \E^K[||J'(u^{*,N})||^2] \nonumber \\
       &\leq (1+\epsilon ) \Big( \E^K[||u^{K} -u^{*,N}||^2_2]-2 \eta_K\langle \E^K[\overline{j}_N'(u^{K})]-\E^K[J'(u^{*,N}) ] ,u^{K} -u^{*,N}\rangle  \\
	&+ \eta_K^2 \E^K[||\overline{j}_N'(u^{K})-J'_N(u^K) +J'_N(u^K)- J'(u^{*,N})||^2_2] \Big) + (1+1/\epsilon )\eta_K^2 ||J'(u^{*,N})||^2 \nonumber \\
    &\leq (1+\epsilon ) \Big( \E^K  ||u^{K}-u^{*,N}||^2_2 - 2 \eta_K \E^K \Big[ \langle J_N'(u^{K})-J'(u^{K})+J'(u^{K})-J'(u^{*,N}), u^{K} -u^{*,N} \rangle \Big] \nonumber \\ 
	& + 2\eta_K^2( \E^K[ ||J_N'(u^{K})-J'(u^{*,N})||_2^2 ]+ \underbrace{||\overline{j}_N'(u^{K})-J'_N(u^K)||^2}_{\mathlarger{*}} )\Big) + (1+\frac{1}{\epsilon})\eta_K^2C/N^2 \label{intermediate_1}
    \end{align}
We note the following inequalities: 
\begin{align}
-2\eta_K \langle J_N'(u^{K})-J'(u^{K}), u^{K} -u^{*,N} \rangle & \leq \eta_K \bigg( \frac{||J'_N(u^K)-J'(u^K)||^2}{\lambda} + \lambda ||u^K-u^{*,N} ||^2_2 \bigg) \nonumber \\ 
	&\leq \frac{\eta_K}{\lambda} \epsilon_N^2  + \lambda \eta_K||u^K-u^{*,N} ||^2_2 
\end{align}
Also, we have by the strong convexity assumption: 
\begin{align}
    2 \eta_K \langle J'(u^{K})-J'(u^{*,N}), u^{K} -u^{*,N} \rangle \geq 2 \lambda \eta_K ||u^K-u^{*,N}||^2_2
\end{align}
Next, we have
\begin{align}
    2\eta_K^2 \E^K[ ||J_N'(u^{K})-J'(u^{*,N})||_2^2 ] & \leq 4\eta_K^2 \E^K[ ||J_N'(u^{K})-J'(u^K)||^2 + || J'(u^K)-J'(u^{*,N})||_2^2 ] \nonumber\\ 
    & \leq 4 C_L\eta^2_N \epsilon^2_K+4 C \eta^2_K ||u^K-u^{*,N}||^2
\end{align}
and that 
\begin{align}
    \mathlarger{*} &= \Big|\Big|\E\Big[b'_u(X^N_{t_n}, u_{t_n})Y^N_{t_n} + \sigma'_u(u_{t_n})Z^N_{t_n} +f'_u(X^N_{t_n}, u_{t_n}) -\frac{1}{B}\sum^B_{l=1} b'_u(X^{N,l}_{t_n}, u_{t_n})Y^l_{n} + \sigma'_u(u_{t_n})Z^l_{n} +f'_u(X^{N,l}_{t_n}, u_{t_n}) \Big]\Big|\Big|^2 \nonumber\\
    & \leq 6C(\frac{1}{N^2}+1)+\frac{1}{B^2} \sum^{B}_{i,j} \E^K \big[ \E[\langle Z^N_{t_n}-Z^i_{n}, Z^N_{t_n}-Z^j_{n} \rangle | \mathcal{G}_K \bigvee \mathcal{F}_{\lbrace X_{0,...,n}^{u^k,i} \rbrace^M_{i=1}}  ] \big] \nonumber \\ 
    &\leq C+ \frac{1}{B^2} \sum^{B}_{i} \E^K \big[|Z^N_{t_n}-Z^i_{n}|^2]  \leq C (1+ \frac{N}{B}),  \label{avg_Z_estimate}
\end{align}
where in the last inequality, we used Proposition \ref{prop_exp_equiv} and Lemma 3.3 in \cite{Hui1}. That is $\sup_{0\leq n \leq N} \E[|Y_n|^2] \leq C,\sup_{0\leq n \leq N-1} \E[|Z_n|^2] \leq CN$ uniformly for all control $u \in \mathcal{U}^N$.  When we pick $B=N$, we obtain $\mathlarger{*} \leq C$. 
As such, putting everything together we have 
\begin{align}
    \eqref{intermediate_1} &\leq (1+\epsilon) \E^K \Big[ ||u^K-u^{*,N}||^2_2 +\frac{\eta_K}{\lambda} \epsilon_N^2  + \lambda \eta_K||u^K-u^{*,N} ||^2_2- 2 \lambda \eta_K ||u^K-u^{*,N}||^2_2 \nonumber \\ 
	&+4 \eta_K^2C_L ||u^K-u^{*,N}|| + 4 \eta_K^2 \epsilon^2_N+2C \eta_K^2 N \Big]  + (1+\frac{1}{\epsilon})(\eta_K^2C/N^2) \nonumber \\ 
	& \leq  (1+\epsilon) \Big((1- \lambda \eta_K +4 \eta_K^2C_L) \E^K \big[ ||u^K-u^{*,N}||^2_2 \big] + C(\eta_K + \eta_K^2) \epsilon_N^2 + 2C \eta_K^2 \Big) + (1+\frac{1}{\epsilon})(\eta_K^2C/N^2) \nonumber \\ 
    & \leq (1+\epsilon ) \Big( (1- \tilde{c} \eta_K \big)\ \E^K[ ||u^{K} -u^{*,N}||^2_2 ]+ C\eta_K \epsilon_N^2 + 2C\eta_K^2 \Big)+(1+\frac{1}{\epsilon}) \eta^{2}_KC/N^2 
\end{align}
where, $\tilde{c}=\lambda-4C_L \eta_k$. 
By taking $\eta_K \sim \frac{\theta}{K+M}$, let $\eta'_K=1/(K+M)$, picking $M$ and $\theta$ such that 
\begin{equation}\label{c_larger_than1}
	2c:=\lambda \theta - 4C_L\theta^2/(1+M)>2
\end{equation}
then we have for some constant $C$,
\begin{equation} \label{base_1}
	\E^K[|| u^{K+1}-u^{*,N}||^2_2] \leq (1+\epsilon ) \Big( (1-2c \eta'_K ) \E^K[ ||u^{K} -u^{*,N}||^2_2] +2C \eta'^2_K+ C\eta'_K \epsilon_N^2 \Big)+(1+\frac{1}{\epsilon})\eta'^2_K \theta^2 C/N^2
\end{equation}
By picking $\epsilon=c \eta'_K$, we have that for some constant $C>0$
\begin{align}\label{base}
    \eqref{base_1} \leq &  (1-c \eta_K' )\E^K[ ||u^{K} -u^{*,N}||^2_2]+ 2C\eta'^2_K+C \eta'_K \epsilon^2_N
\end{align}
Then, by using \eqref{base}, we have that for some $M>0$: 
\begin{align}
    \E[|| u^{K+1}-u^{*,N}||^2_2] \leq (K+M)^{-c}||u^0- u^{*,N}||^2_2 + C (K+M)^{-1} + CN^{-2} 
\end{align}
By using the fact that $c>1$, we conclude that for some $C >0$, 
\begin{align}
    \E[|| u^{K+1}-u^{*,N}||^2_2] \leq C ( \frac{1}{K} + N^{-2} )
\end{align}
\end{proof}
We now state the first main theorem as follows. 
\begin{theorem}\label{main_projection}
    Let the assumption made in Theorem \ref{standard_new} hold, then one has 
\begin{align}
\E[||u^* - u^{K+1}||^2] \leq C(\frac{1}{K} + \frac{1}{N^2}).  
\end{align}
\end{theorem}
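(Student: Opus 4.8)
The plan is to obtain Theorem \ref{main_projection} as an essentially immediate consequence of Theorem \ref{standard_new} together with the a priori comparison between the true optimal control $u^*$ and its discrete counterpart $u^{*,N}$ recorded in \eqref{projection_ustar_estimate}. Concretely, I would start from the triangle inequality in $L_2([0,T];\bR^k)$, in the form
\begin{align}
\E[\|u^* - u^{K+1}\|^2_2] \leq 2\,\E[\|u^* - u^{*,N}\|^2_2] + 2\,\E[\|u^{*,N} - u^{K+1}\|^2_2],
\end{align}
and then bound the two terms separately.

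For the first term, note that $u^{*,N}$ is a deterministic object (the optimal control over the finite-dimensional space $\mathcal{U}^N$), so $\E[\|u^* - u^{*,N}\|^2_2] = \|u^* - u^{*,N}\|^2_2$, and this is $\leq C/N^2$ by \eqref{projection_ustar_estimate}. Recall that \eqref{projection_ustar_estimate} itself was derived from the strong convexity Assumption \ref{strong_convexity} applied to the energy gap $J(u^{*,N}) - J(u^*) \leq J(\bar u^N) - J(u^*) \leq C/N^2$, the latter coming from projecting $u^*$ onto $\mathcal{U}^N$ and using the Lipschitz continuity of $J'$. For the second term, one invokes Theorem \ref{standard_new} directly: under the stated learning-rate schedule it gives $\E[\|u^{K+1} - u^{*,N}\|^2_2] \leq C(1/K + N^{-2})$. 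Adding the two bounds yields $\E[\|u^* - u^{K+1}\|^2_2] \leq C(1/K + 1/N^2)$, which is the claim.

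The only points requiring care are bookkeeping rather than genuine obstacles. One must check that the hypotheses invoked in "the assumption made in Theorem \ref{standard_new} hold" indeed cover Assumptions \ref{assumption 1} and \ref{strong_convexity}, the decay condition $\eta_k = \theta/(k+M)$ with $\lambda\theta - 4C_L\theta^2/(1+M) > 2$, and — crucially — the batch-size choice $B = N$ used in \eqref{avg_Z_estimate} so that the SGD noise term $\mathlarger{*}$ stays of order $1$ rather than growing like $N/B$; this is what makes the $1/K$ term possible and is implicit in the algorithmic setup. The substantive work has already been done upstream: the $1/N^2$ rate relies on the first-order FBSDE estimate of Lemma \ref{first_order_FBSDE} (which in turn needs $2\Theta \geq 3$ for the higher-order forward scheme $\Psi$) and the gradient-approximation bound $\epsilon_N^2 \leq Ch^2$ of Lemma \ref{ea1}, while the $1/K$ rate is the output of the stochastic-approximation recursion in the proof of Theorem \ref{standard_new}. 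Consequently I expect no real difficulty here beyond assembling these ingredients; if anything, the ``main obstacle'' is purely presentational, namely making explicit that Theorem \ref{main_projection} is a corollary and not an independent result.
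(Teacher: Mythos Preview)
Your proposal is correct and matches the paper's proof essentially verbatim: the paper simply applies the triangle inequality $\E[\|u^* - u^{K+1}\|^2] \leq 2\E[\|u^* - u^{*,N}\|^2] + 2\E[\|u^{*,N} - u^{K+1}\|^2]$ and invokes \eqref{projection_ustar_estimate} and Theorem \ref{standard_new} for the two pieces. There is nothing more to add.
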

\begin{proof}
    From \eqref{projection_ustar_estimate}, we have 
    \begin{align}
        \E[||u^* - u^{K+1}||^2] &\leq 2 \E[||u^* - u^{N,*}||^2] + 2 \E[||u^{N,*}-u^{K+1}||^2] \nonumber\\
        &\leq C(\frac{1}{K} + \frac{1}{N^2}). 
    \end{align}
    which concludes the proof. 
\end{proof}

\subsection{Proof of contraction algorithm, a special case}
We now proceed to show convergence of Algorithm \ref{algorithm_batch_sample} for the damped contraction case and we focus on a special linear quadratic case of the control problem. 

\begin{theorem}\label{thm_contract_discrete}
    Under Assumptions \ref{assumption 1}-\ref{Contraction_setup_assumption},
    there exists $\rho \in (0,1)$, $\eta: (0,1) \rightarrow (0,1)$ and $\eta(\rho) <1$ so that: 
   \begin{align}
    \E^{k}[||u^{N,*}-u^{k+1}||^2] \leq C ( \eta^{k+1} + C\frac{N}{M}+ \frac{1}{N}).
\end{align}
where $u^k$ is defined via Algorithm \ref{algorithm_batch_sample} 3-i). 
\end{theorem}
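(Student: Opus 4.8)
The plan is to mimic the structure of the projection-algorithm proof (Theorem~\ref{standard_new}), but now tracking a damped fixed-point iteration rather than a gradient step, and to split the total error into three pieces: a geometric contraction term, a batch-sampling (Monte Carlo) term of order $N/M$, and a time-discretization term of order $1/N$. First I would write the exact (continuous, or classically-discretized) damped map $T_\rho(u)_t := (1-\rho)\,\E[\bar H(t,X^u_t,Y^u_t,Z^u_t)] + \rho\, u_t$ and establish that, under Assumption~\ref{Contraction_setup_assumption}, part~II (the inequality \eqref{assumption_convexity_2} with $\mu<0$), the map $T_\rho$ is a contraction on $\mathcal U$ (or $\mathcal U^N$) for suitable $\rho\in(0,1)$: expanding $\|T_\rho(u)-T_\rho(v)\|^2_2$ and using $\langle \int_0^T \E[\bar H(\cdot,X^u)-\bar H(\cdot,X^v)], u-v\rangle \le \mu\|u-v\|^2$ together with the boundedness bound from Proposition~\ref{contraction_stab_yz} gives $\|T_\rho(u)-T_\rho(v)\|^2 \le \eta(\rho)\|u-v\|^2$ with $\eta(\rho)<1$ once $\rho$ is close enough to $1$ (the cross term carries the favorable sign of $\mu$, and the squared term is controlled by $(1-\rho)^2 C$). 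Here $u^{N,*}$ is the fixed point of the discrete map; I would also note $\|u^{N,*}-u^*\|^2 \le C/N$ by the same kind of projection/discretization estimate used before (this is the source of the $1/N$ term, consistent with Euler order $1/2$ in $L^2$ giving $1/N$ in the squared norm).

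Next I would analyze the actual iterates $u^{k+1}_n = (1-\rho)\tilde u^{k+1}_n + \rho u^k_n$ with $\tilde u^{k+1}_n = \frac1M\sum_{i=1}^M \bar H(t_n,X^{u^k,i}_n,Y^{u^k,i}_n,Z^{u^k,i}_n)$. Write $u^{k+1}-u^{N,*} = (1-\rho)\big(\tilde u^{k+1} - \E[\bar H(\cdot)\mid\mathcal G_{k-1}]\big) + \big[(1-\rho)\E[\bar H(\cdot)\mid\mathcal G_{k-1}] + \rho u^k - u^{N,*}\big]$, where the first bracket is a mean-zero (conditional on $\mathcal G_{k-1}$) sampling fluctuation and the second is $T^N_\rho(u^k) - T^N_\rho(u^{N,*})$ plus the time-discretization bias of the sample-wise scheme. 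Squaring, taking $\E^k[\cdot]$, and using the contraction factor $\eta(\rho)$ on the deterministic part gives a recursion of the shape
\begin{align}
\E^k[\|u^{k+1}-u^{N,*}\|^2] \le \eta(\rho)\,\E^k[\|u^k-u^{N,*}\|^2] + (1-\rho)^2\, C\,\frac{N}{M} + C\,\frac1N,
\end{align}
where the $N/M$ term comes from bounding the conditional variance of $\frac1M\sum_i \bar H$: since $\bar H$ contains the $\beta Z$ piece and $\E[|Z_n|^2]\le CN$ (Lemma~3.3 of \cite{Hui1}, as used in \eqref{avg_Z_estimate}), each summand has second moment $O(N)$ and the empirical average has variance $O(N/M)$; the $1/N$ term is the bias between the sample-wise/Euler discretized $\bar H$ and its continuous counterpart, handled via Proposition~\ref{prop_exp_equiv} and Propositions~\ref{continuous_YZbound}, \ref{contraction_stab_yz}. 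Unrolling the geometric recursion from $k=0$ yields $\E[\|u^{k+1}-u^{N,*}\|^2] \le \eta^{k+1}\|u^0-u^{N,*}\|^2 + \frac{C}{1-\eta}\big(\frac{N}{M} + \frac1N\big)$, and absorbing constants gives the claimed bound $C(\eta^{k+1} + C N/M + 1/N)$.

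The main obstacle, I expect, is establishing the contraction property rigorously and cleanly — i.e., going from the one-sided estimate \eqref{assumption_convexity_2} on the $\bar H$ map to a genuine $L^2$ contraction of the \emph{damped} map with a quantitative $\eta(\rho)<1$, and simultaneously exhibiting an explicit range of $\rho$ for which this works together with making the sampling constant $(1-\rho)^2 C$ small. The subtlety is that $\mu<0$ only controls the symmetric (cross) part $\langle \Delta\bar H, u-v\rangle$, while $\|T_\rho(u)-T_\rho(v)\|^2$ also generates the unsigned term $(1-\rho)^2\|\Delta\bar H\|^2 \le (1-\rho)^2 C\|u-v\|^2$ from Proposition~\ref{contraction_stab_yz}; one needs $2\rho(1-\rho)(-\mu) $ large enough relative to $(1-\rho)^2 C + \rho^2$-type contributions, which pins down $\rho$ near a specific value and forces a careful bookkeeping of constants (this is presumably why the statement says the analysis "needs to be further refined"). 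A secondary technical point is correctly identifying the filtration and verifying the conditional-unbiasedness of $\tilde u^{k+1}$ given $\mathcal G_{k-1}$ so that the fluctuation/deterministic split is legitimate; this is exactly the role Proposition~\ref{prop_exp_equiv} plays, and it should go through as in \cite{Hui1}.
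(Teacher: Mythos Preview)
Your proposal is correct and follows essentially the same route as the paper: direct expansion of $\|u^{N,*}-u^{k+1}\|^2$ into a $(1-\rho)^2$ piece (yielding $CN/M$ from the variance of $Z_n$ and $C\|u^k-u^{N,*}\|^2 + C/N$ from the off-diagonal sums via Proposition~\ref{contraction_stab_yz} and Euler bias), a $2(1-\rho)\rho$ cross term controlled through Assumption~\ref{Contraction_setup_assumption}.II, and the $\rho^2\|u^k-u^{N,*}\|^2$ piece, followed by unrolling the resulting geometric recursion. The one organizational difference is that the paper does \emph{not} keep the favorable sign of $\mu$ to tighten the contraction factor as you propose; instead it applies Young's inequality with weight $|\mu|$ to the discretization piece of the cross term so that the resulting $2|\mu|\|u^{N,*}-u^k\|^2$ exactly cancels the $2\mu\|u^{N,*}-u^k\|^2$ from \eqref{assumption_convexity_2}, leaving the cross term bounded by $C/N$ alone and producing the simpler factor $\eta = C(1-\rho)^2+\rho^2$, minimized at $\rho=C/(C+1)$.
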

\begin{proof}
We introduce the following shorthand notation:  
\begin{align}
    \bar{H}_t^{',u}  &= \E^u[ \bar{H}(t,X^{u}_t,Y^{u}_t,Z^{u}_t) ] \label{symb_note1} \\ 
    \bar{H}_n^{u}    &=  \bar{H}(t_n,X^{N, u}_n,Y^{u}_n,Z^{u}_n) \label{symb_note2} \\ 
    \bar{H}_n^{N,u}  &=  \bar{H}(t_n,X^{N,u}_{t_n},Y^{N,u}_{t_n},Z^{N,u}_{t_n}) \label{symb_note3} \\
    \bar{H}_n^{',N,u}  &=  \E^u[\bar{H}(t_n,X^{N,u}_{t_n},Y^{N,u}_{t_n},Z^{N,u}_{t_n}) ] \label{symb_note4}
\end{align}
and we comment that under the Euler scheme, we use the notation $X^{N,u}_n, X^{N,u}_{t_n}$ interchangeably. The upper index $N$ simply means it is under temporal numerical discretization.

We note that given the same control $u \in \mathcal{U}^N$, \eqref{symb_note2} and \eqref{symb_note3} have the same expectation. To wit:
\begin{align}
    \E^u[\bar{H}_n^{u}]&= \E^u[\alpha(X^{N,u}_n) Y^u_n+\beta Z^u_n+ \gamma(X^{N,u}_n)] \nonumber \\ 
    &= \E^u[\alpha(X^u_n) \E^u[Y^u_n|\mathcal{F}^u_{t_n}]+\beta \E^u[Z^u_n|\mathcal{F}^u_{t_n}]+ \gamma(X^{N, u}_n)] \nonumber \\ 
    &= \E^u[\alpha(X^{N,u}_n) Y^{N,u}_{t_n}+\beta Z^{N,u}_{t_n}+ \gamma(X^{N, u}_{n})] \nonumber \\ 
    &= \bar{H}_n^{',N,u},
\end{align}
where we used $\E^u$ to emphasize the fact that the expectation is taken given control $u$.  Also, in the second equality above, we used the linear structure of $\bar{H}_n^{u}$ and the measurability of $\alpha, \beta$. 

Consider the following two sets of equations: 
\begin{align}
        u_{t}^{N,*}|_{t=t_n} &= \mathcal{P}_N \Big( (1-\rho)\E[\Bar{H}(t,X^{u^{N,*}}_{t},Y^{u^{N,*}}_{t},Z^{u^{N,*}}_{t})] + \rho u_{t}^{N,*} \Big) \Big|_{t=t_n} \label{disc_diff_1} \\ 
        u_n^{k+1} &= \mathcal{P}_N \Big((1-\rho) \frac{1}{M}\sum_{i=1}^{M}\Bar{H}(t_n,X^{u^k,i}_n,Y^{u^k,i}_n,Z^{u^k,i}_n) + \rho u_n^k \Big) \label{disc_diff_2}
\end{align}
Take the difference between \eqref{disc_diff_1} and \eqref{disc_diff_2} and take the $L_2$ norm to obtain: 
\begin{align}
    ||u^{N,*}-u^{k+1}||^2 &\leq  (1-\rho)^2  \int^T_0 \Big( \frac{1}{M}\sum_{i=1}^{M} (\bar{H}_n^{u^k,i} - \bar{H}_{t}^{',u^{N,*}}) \Big)^2 dt + 2(1-\rho)\rho \underbrace{\int^T_0 \langle\frac{1}{M}\sum_{i=1}^{M} (-\bar{H}_n^{u^k,i} + \bar{H}_t^{',u^{N,*}}), u_{t}^{N,*}-u_n^{k} \rangle dt}_{\mathlarger{B_0}}  \nonumber \\ 
    &+ \int^T_0 \rho^2 |u_{t_n}^{N,*}-u_n^{k}|^2  dt \label{diff_samplewise_1}
\end{align}
For the first term in \eqref{diff_samplewise_1}, we have by taking expectation on both sides: 
\begin{align}
 \int^T_0 \E^{k} \Big [\Big( \frac{1}{M}\sum_{i=1}^{M} (\bar{H}_n^{u^k,i} - \bar{H}_n^{',u^{N,*}}) \Big)^2 \Big] dt &= \underbrace{ \int^T_0 \frac{1}{M^2}\sum^M_{i=1} \E^{k} \Big [ (\bar{H}_n^{u^k,i} - \bar{H}_t^{',u^{N,*} })^2 \Big ] dt}_{\mathlarger{B_1}}  \nonumber\\ 
&+ \frac{1}{M^2}\sum_{i\neq j}^{M}  \underbrace{ \int^T_0 \E^{k} \Big [ (\bar{H}_n^{u^k,i} - \bar{H}_t^{',u^{N,*}}) (\bar{H}_n^{u^k,j} - \bar{H}_t^{',u^{N,*}}) \Big ] dt }_{\mathlarger{B_2}}
\end{align}
For the term $B_1$, we have by standard estimates that $\int^T_0 \E^k[|\bar{H}_n^{u^k,i}|^2] dt \leq CN$  and $\int^T_0 |\bar{H}_t^{',u^{N,*}}|^2 dt \leq C$ where the first `$N$' term originates from the bound in $Z^{u^k}_n$. 
Thus, the term $B_1$ is uniformly bounded by $C \frac{N}{M}$. For the second term: 
\begin{align}
B_2&= \frac{M^2-M}{M^2} \int^T_0 |\bar{H}_n^{',N, u^k}-\bar{H}_t^{',u^{N,*}}|^2 dt \nonumber \\
& \leq \int^T_0 |\bar{H}_n^{',N, u^k}-\bar{H}_{t_n}^{',u^k} + \bar{H}_{t_n}^{',u^k}-\bar{H}_{t_n}^{',u^{N,*}}+\bar{H}_{t_n}^{',u^{N,*}}-\bar{H}_{t}^{',u^{N,*}}|^2 dt \nonumber \\ 
& \leq 3 \int^T_0 |\bar{H}_n^{',N, u^k}-\bar{H}_{t_n}^{',u^k}|^2 dt + C ||u^{N,*}-u^k||^2 + C \frac{1}{N^2} \nonumber\\
& \leq C \frac{1}{N} + C ||u^{N,*}-u^k||^2,
\end{align}
where we have used independence among samples and that: 
\begin{align}
\E^k[\bar{H}_n^{u^k}-\bar{H}_n^{',u^{N,*}}] &=\E^{k} \Big[\E^k[\bar{H}_n^{u^k}]-\bar{H}_{t_n}^{',u^{N,*}} \Big] \nonumber \\
&= \E^{k} \Big[ \bar{H}_n^{',N,u^k} -\bar{H}_{t_n}^{',u^{N,*}} \Big].
\end{align}
And in the second to last inequality, we also used Proposition \ref{contraction_stab_yz}.

As such we have:
\begin{align}
    \sum_n  \E \Big [\Big( \frac{1}{M}\sum_{i=1}^{M} (\bar{H}_n^{u^k,i} - \bar{H}_n^{',u^{N,*}}) \Big)^2 \Big] h & \leq C(\frac{1}{M} + \frac{1}{N}) + C ||u^k-u^{N,*}||^2.
\end{align}

In the meantime, we also have:
\begin{align}
    2\E^{k}[\mathlarger{B_0}] & = 2 \int^T_0 \langle -\bar{H}_n^{',N,u^k,i} + \bar{H}_t^{',u^{N,*}}, u_{t_n}^{N,*}-u_n^{k} \rangle dt \nonumber \\ 
    & = 2 \int^T_0 \langle -\bar{H}_n^{',N,u^k,i} + \bar{H}_{t}^{',u^{k}},u_{t_n}^{N,*}-u_n^{k} \rangle+ \langle  -\bar{H}_{t}^{',u^k} + \bar{H}_t^{',u^{N,*}}, u_{t}^{N,*}-u_n^{k} \rangle dt \nonumber\\
    & \leq  2|\mu|||u^{N,*}-u^{k}||^2 + \frac{1}{2|\mu|} \int^T_0 | \bar{H}_{n}^{',N,u^k,i}- \bar{H}_t^{',u^{k}}|^2 dt + 2\mu ||u^{N,*}-u^{k}||^2 \label{cancellation} \\
    & \leq C \frac{1}{N}
\end{align}
where in the last inequality, we used the assumption that $\mu$ is negative and in $\eqref{cancellation}$ we used assumption made in the theorem. 
Putting everything together, we have
\begin{align}
    \E^{k}[||u_{t_n}^{N,*}-u_n^{k+1}||^2] & \leq (1-\rho)^2 C \Big( \frac{1}{M}+ \frac{1}{N} + ||u^k-u^{N,*}|| \Big) + (1-\rho)\rho C \frac{1}{N} + \rho^2 ||u^k-u^{N,*}||^2 \nonumber \\ 
    & \leq  (C (1-\rho)^2 + \rho^2)||u^k-u^{N,*}||^2 + C (\frac{1}{M}+ \frac{1}{N}) \label{contraction_ineq_1}
\end{align}
Note that in \eqref{contraction_ineq_1}, there exists $\rho \in (0,1)$ such that $\eta:=(C (1-\rho)^2 + \rho^2) <1$. For instance, this can be achieved by taking $\rho=\frac{C}{C+1}$. Thus, by iterating $\eqref{contraction_ineq_1}$, we then have: 
\begin{align}
    \E^{k}[||u^{N,*}-u^{k+1}||^2]&\leq \eta^{k+1} ||u^{N,*}-u^{0}||^2 + \sum^k_{l=0} \eta^l C(\frac{N}{M}+ \frac{1}{N}) \nonumber\\
    & \leq C ( \eta^{k+1} + \frac{N}{M}+ \frac{1}{N})
\end{align}
\end{proof}
\begin{theorem}
    Let the assumption made in \ref{Contraction_setup_assumption} hold, then one has for some appropriate $\rho \in (0,1)$, there exist $\eta \in (0,1)$ such that 
\begin{align}
    \E[||u^{N,*}-u^{k+1}||^2] \leq C ( \eta^{k+1} + \frac{N}{M}+ \frac{1}{N}).
\end{align}
\end{theorem}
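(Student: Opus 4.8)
The plan is to derive this unconditional bound from the one-step contraction estimate already obtained inside the proof of Theorem~\ref{thm_contract_discrete}, by de-conditioning it and then iterating the resulting deterministic scalar recursion. Fix the damping $\rho\in(0,1)$ used there (e.g.\ $\rho=\frac{C}{C+1}$, so that $\eta:=C(1-\rho)^2+\rho^2\in(0,1)$). Inequality \eqref{contraction_ineq_1} then reads
\begin{align}
\E^{k}\big[||u^{N,*}-u^{k+1}||^2\big] \leq \eta\,||u^{k}-u^{N,*}||^2 + C\Big(\frac{N}{M}+\frac{1}{N}\Big),
\end{align}
where the additive term is a genuine deterministic constant: it is assembled from the uniform-in-control bounds of Propositions~\ref{contraction_stab_yz} and~\ref{prop_exp_equiv}, the moment estimate $\sup_n\E[|Z_n|^2]\leq CN$, and the cancellation in \eqref{cancellation}, none of which depends on the realization of the batch. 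Since $\mathcal{G}_{k-1}\subset\mathcal{G}_k$ and $u^{k}$ is $\mathcal{G}_{k-1}$-measurable, applying $\E[\,\cdot\,]$ to both sides and invoking the tower property $\E[\E^{k}[\,\cdot\,]]=\E[\,\cdot\,]$ collapses the nested conditional expectations, giving, with $e_k:=\E[||u^{k}-u^{N,*}||^2]$,
\begin{align}
e_{k+1}\leq \eta\,e_k + C\Big(\frac{N}{M}+\frac{1}{N}\Big).
\end{align}

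The next step is simply to unroll this scalar recursion. After $k+1$ iterations,
\begin{align}
e_{k+1}\leq \eta^{\,k+1}e_0 + C\Big(\frac{N}{M}+\frac{1}{N}\Big)\sum_{l=0}^{k}\eta^{\,l}\leq \eta^{\,k+1}e_0 + \frac{C}{1-\eta}\Big(\frac{N}{M}+\frac{1}{N}\Big).
\end{align}
Because the initialization $u^0=0$ is deterministic, $e_0=||u^{N,*}||^2$ is a fixed finite constant; absorbing it together with $1/(1-\eta)$ into $C$ yields
\begin{align}
\E\big[||u^{N,*}-u^{k+1}||^2\big]\leq C\Big(\eta^{\,k+1}+\frac{N}{M}+\frac{1}{N}\Big),
\end{align}
which is the assertion.

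There is essentially no new analytic content here: the difficult estimates — the cancellation \eqref{cancellation}, the batch-variance bound $B_1\leq C\frac{N}{M}$, the bias control $B_2\leq C\frac{1}{N}+C||u^{N,*}-u^k||^2$, and the contraction of the damped map — were all carried out in Theorem~\ref{thm_contract_discrete}. The only point requiring care, and the reason the present statement is the genuinely rigorous counterpart of the (informally iterated) conditional bound displayed at the end of Theorem~\ref{thm_contract_discrete}, is the filtration bookkeeping: one must verify that the indexing of $\mathcal{G}_\bullet$ makes $u^{k}$ measurable with respect to the $\sigma$-algebra on which \eqref{contraction_ineq_1} conditions, so that the tower property legitimately turns the conditional recursion into the deterministic one above and the geometric series closes with no residual randomness. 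I would also check once more that the constant $C$ produced is uniform in $k$, $M$, and $N$ — which it is, being a finite combination of the uniform constants supplied by the cited propositions — since otherwise the stated rate would be vacuous.
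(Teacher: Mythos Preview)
Your proposal is correct and is precisely the argument the paper has in mind: the paper's own proof is the single sentence ``The proof is similar to that for Theorem \ref{main_projection},'' and what is actually required---taking unconditional expectation of the one-step estimate \eqref{contraction_ineq_1} via the tower property and then iterating the resulting deterministic scalar recursion $e_{k+1}\le \eta\,e_k + C(\tfrac{N}{M}+\tfrac{1}{N})$---is exactly what you carry out. Your remark that this de-conditioning step is what makes the iteration at the end of Theorem~\ref{thm_contract_discrete} rigorous (since one cannot directly iterate a conditional inequality whose right-hand side is itself random) is well taken and is the only substantive point beyond bookkeeping.
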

The proof is similar to that for Theorem \ref{main_projection}. 

\section{Numerical Example}
In this section, we demonstrate the convergence of the algorithm by studying a few numerical examples. We show examples for both cases where the diffusion are controlled/uncontrolled. More specifically: 
\begin{enumerate}
    \item For the Batch projection algorithm with high order in forward SDE, we note that both numerical examples agree with the analysis: a first decay is observed when the number of iterations is taken as $K \sim \mathcal{O}(N^2)$. 
    \item For the damped contraction algorithm, we examine the convergence behavior of the algorithm by choosing different hyperparameter for $K,N,M$. We comment that to obtain exactly the rate of convergence that match the experiment results, a more refined analysis is needed. 
    
    More specifically, the $ \sim \mathcal{O}(\sqrt{\eta^{K} + \frac{N}{M}+ \frac{1}{N}})$ convergence is harder to examine since our conclusion is based on a more restrictive assumption, and we only know that there exists $\rho$ such that $\eta <1$ for the contraction to take place. In this regard, we simply pick $\eta \sim \mathcal{O}(1-\rho)$ to be a large constant, in this case $0.995$ and ensure that the convergence will take place when the batch size/iteration number is large. To study the convergence behavior, we take the following relationship between $K,M,N$: let $N=\lfloor 1.0/(\eta^K) \rfloor$ where $K$ is the number of iteration, so that we are trying to roughly look at the order  $ \sim \mathcal{O}(\sqrt{\frac{1}{N} + \frac{N}{M} + \frac{1}{N}})$. As such, we note that after choosing $K$, by taking $M\sim \mathcal{O}(N^2)$, one should expect the convergence to take place and the convergence should be of half order. This is especially the case when the diffusion is controlled. Lastly, we remark that the while for the projection approach, it is viable to only use plain SGD (one sample) as optimization procedure, for the designed damped-contraction approach, a reasonable batch size is needed (see examples below). 
\end{enumerate}
We note that a higher order scheme is used for SDE simulation for the projection algorithm and we used Method (4) from \cite{Weinan1}, section 7.5 which is of order 2. The explicit form is given in the appendix. 

\subsection{Example 1, controlled diffusion.}
 	Consider the following loss function where $d=2$        
\begin{equation}
 		J[u]=\frac{1}{2} \int_0^T \sum^d_{i=1}\mathbb{E}[(x^i-x^{i,*})^2 ]dt +\frac{1}{2} \int_0^T \sum^d_{i=1} u^{i,2}(t) dt + \frac{1}{2} \sum^{d}_{i=1} (x_T^i)^2
\end{equation}
where the forward process is 
 	\begin{equation}
 		d x^i(t) = u^i(t) -r^i(t) dt + \sigma u^i(t) dW_t \  \  	
    \end{equation}
    so that the diffusion term is controlled. 
And one needs to find $u \in \mathcal{U}$ such that 
$$J(u^*)=\min_{u\in \mathcal{U}} J(u).$$

To design parameters so that an analytic formula of the solution exists, we pick the function $r^i, x^{i,*}$ to be the following, and $u^i_t$ can be found accordingly: 
\begin{align}
	& r^1_t := \frac{-t^2/2}{\beta_t}, \ x^{1,*}=t+(\frac{T^2}{2 \sigma^2} - \frac{X^1_T}{\sigma^2}  ) \alpha_t , \  u^1_t= \frac{-t^2/2+T^2/2-X_T^1}{\beta_t}, \nonumber \\ 
	&r^2_t :=\frac{-\sin(t)}{\beta_t}, \ x^{2,*}=\cos(t) +(\frac{\sin(T)}{\sigma^2}- \frac{X^2_T}{\sigma^2} )\alpha_t , \ u^2_t :=\frac{-\sin(t)+\sin(T)-X^2_T}{\beta_t}. 
\end{align}
where $$\alpha_t=\ln\frac{(1+\sigma^2)+\sigma^2 T}{(\sigma^2+1)+\sigma^2(T-t)} \ , \ \beta_t=(1+\sigma^2)+\sigma^2(T-t) $$
and with $D=\ln(1+\frac{\sigma^2 T}{1+\sigma^2} )/ \big(\sigma^2+\ln(1+\frac{\sigma^2 T}{1+\sigma^2} )  \big) $
The parameters are set to be $x_0=1.0, \sigma=0.5$, $T=1.0$. 
\subsubsection{The projection scheme with batch samples.}
We solve the problem using projection algorithm with batch samples. In this example (Figure \ref{fig:1dcompare}), to obtain the numerical solutions for the top two figures, we take the total of temporal discretization $N=60$ and use batch size $M=N$. The total number of iterations is taken to be $N^2=3600$. The red curves are the exact solutions. To see that the predicted decay rate of $\sim \mathcal{O}(\sqrt{\frac{1}{K}+ \frac{1}{N^2}})$, we note that: 
\begin{enumerate}
    \item  In the bottom left figure, with $M=N$, we set $K$ to be a constant multiple of $N$, and see that the error demonstrates a half order decay. And it is inline with expectation. 
    \item In the bottom right figure, with $M=N$, we set $K$ to be a constant multiple of $N^2$, and see that the error demonstrates a first order decay. This matches the $\mathcal{O}(\sqrt{\frac{1}{K}+ \frac{1}{N^2}})$ analytical result. 
\end{enumerate}

\begin{figure}[h]
    \centering 
    \includegraphics[width=1.0\textwidth]{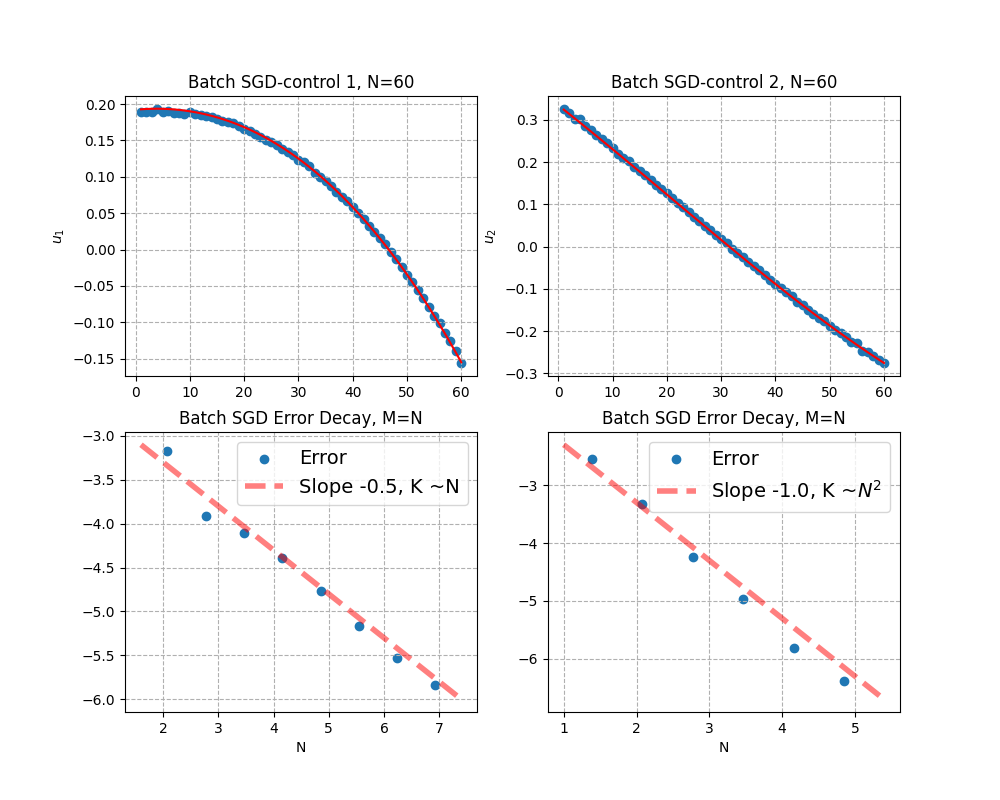} 
    \caption{Top two figures: Demonstration of numerical solutions using the Batch projection algorithm. Bottom two figures: demonstration of error decay by using different number of iterations for $K$.}
    \label{fig:1dcompare} 
\end{figure}

\subsubsection{The damped contraction scheme.}
We now solve the problem using damped contraction algorithm with batch samples. In this example (Figure \ref{fig:2dcompare}), to obtain the numerical solutions for the top two figures, we take the total of temporal discretization $N=55$ and use batch size $M=N^2$. The total number of contraction steps is taken to be $800$. 

It is noted that under some strong assumption (Assumption \ref{Contraction_setup_assumption}), the convergence rate is $\sim \mathcal{O}(\sqrt{\eta^{L} + \frac{N}{M} + \frac{1}{N}})$ where $\eta = C(1-\rho^2)+\rho^2$. Since we only know that there exists $\rho$ such that $\eta <1$ so the contraction will take place, it is very difficult to test the exact convergence rate. In this case, we simply pick $\eta \sim \mathcal{O}(1-\rho)$ to be a large constant, in this case $0.995$ and ensure that the convergence will take place when the batch size is large. More specifically, we let $N=\lfloor 1.0/(\eta^K) \rfloor$ where $K$ is the number of iteration, so that we are expecting the order  $ \sim \mathcal{O}(\sqrt{\frac{1}{N} + \frac{N}{M} + \frac{1}{N}})$.  With this decay rate under the idealized situation in mind, we note that: 
\begin{enumerate}
    \item  In the bottom right figure, with $M=N$, as we increase the number $K$ (we let $K$ takes values in $\lbrace 500 ,600, 700, 800,900 \rbrace$) hence $N$, the error first decays slowly, but then it hits a plateau: there was almost no decrease in error when $N$ was increased from 55 to 91. This shows that one will typically need at least $M>N$ for the convergence to take place which agrees with our analysis. 

    \item In the bottom left figure, with $M=N^2$, we observe that the plateau is gone and the error even demonstrates a first order decay which is faster than our anticipated half-order decay. 
\end{enumerate}

\begin{figure}[h]
    \centering 
    \includegraphics[width=1.0\textwidth]{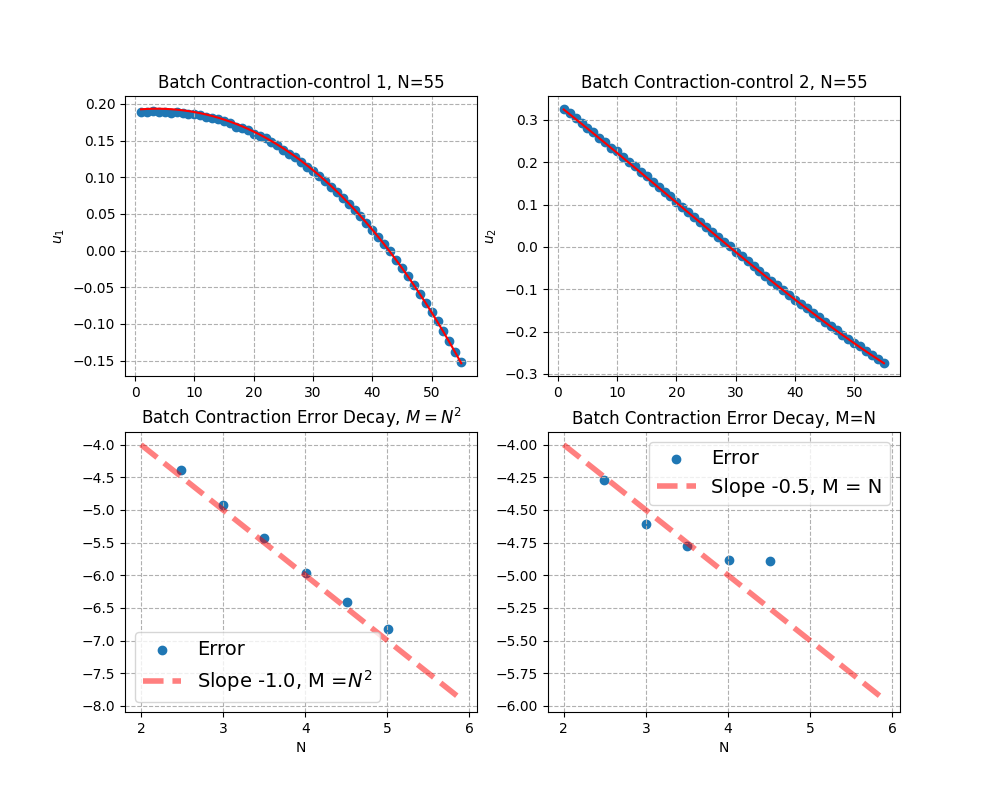} 
    \caption{Top two figures: Demonstration of numerical solutions using the damped contraction algorithm. Bottom two figures: demonstration of error decay by using different number of batch size $M$. $x-$axis is on log scale.}
    \label{fig:2dcompare} 
\end{figure}

\subsubsection{Comparison between different algorithms}
In this subsection, we compare the time efficiency of the three algorithms, namely the original stochastic gradient descent, the newly proposed batch stochastic gradient descent and the damped contraction mapping approach. The result is summarized in Table \ref{eg1_table}.

For batch SGD, we pick the batch size according to our analysis results $M=N$, and to obtain the first order convergence, we pick $K \sim \mathcal{O}(N^2)$. For Damped contraction, we pick $M \sim \mathcal{O}(N^2)$ to ensure the error decay. We then experiment with iteration size $K$ so that the relative error will be on par with that of the first two algorithms. All relative errors are computed based on averages of 50 independent runs. 

The comparison results show that for the current example, both the Batch SGD and the damped contraction methods leverages less number of iterations to achieve similar error level as the original SGD approach. Also, the improvement in efficiency can be observed from the less total time used. The damped contraction mapping approach is observed to utilize slightly more time than the batch SGD approach for each run.  

\begin{table}
\caption{Example 1 Controlled diffusion, efficiency comparison.}	
\label{eg1_table}
\begin{center}
\begin{tabular}{ |c|c|c|c|c|c|} 
\hline
 Method & $M$ (Batch size) & $K$ (Iterations) &$N$ & Time (s) & Relative Error  \\
\hline
SGD \cite{Hui1} &1 & $40^3$ &40  & 34.1s & 0.0037    \\
\hline
Batch SGD & $N$&$40^2$ & 40 &1.03 s & 0.0033 \\
\hline
Damped Contraction& $N^2$ & $710$ & 40 & 1.96s & 0.0035 \\
\hline
\end{tabular}
\end{center}
\end{table}
\subsection{Example 2, uncontrolled diffusion.}
 	Consider the following loss function where $d=2$        
\begin{equation}
 		J[u]=\frac{1}{2} \int_0^T \sum^d_{i=1}\mathbb{E}[(x^i-x^{i,*})^2 ]dt +\frac{1}{2} \int_0^T \sum^d_{i=1} u^{i,2}(t) dt 
\end{equation}
where the forward process is 
 	\begin{equation}
 		d x^i(t) = x^i(t)u^i(t) dt + \sigma x^i(t) dW_t \  \  	
    \end{equation}
    and that the diffusion term is uncontrolled. 
The goal is to find $u \in \mathcal{U}$ such that 
$$J(u^*)=\min_{u\in \mathcal{U}} J(u).$$
where we have $x^{*,i}, u^{*,i}$ to be the following: 
\begin{align}
    x^{1,*} = \frac{e^{\sigma^2 t} - (T-t)^2}{\frac{1}{x_0} -T t +\frac{t^2}{2}} +1 , \ u^{1,*}(t) = \frac{T-t}{\frac{1}{x_0} -Tt +\frac{t^2}{2}} \nonumber\\
    x^{2,*} = \frac{e^{\sigma^2 t} - (e^{-T} -e^{-t})^2}{\frac{1}{x_0} +1 -e^{-t}-te^{-T}} -e^{-t} , \ u^{2,*}(t) = \frac{e^{-T}-e^{-t}}{\frac{1}{x_0} +1 -e^{-t}-te^{-T}}
\end{align}
The parameters are set to be $x_0=1.0, \sigma=0.5, T=1.0$. 

\subsubsection{The projection scheme with batch samples.}
We solve the problem using projection algorithm with batch samples with high order scheme for the forward SDE. In this example (Figure \ref{fig:3dcompare}), to obtain the numerical solutions for the top two figures, we take the total temporal discretization $N=60$ and use batch size $M=N$. The total number of iterations is taken to be $N^2=3600$. To see that the predicted decay rate of $\sim \mathcal{O}(\sqrt{\frac{1}{K}+ \frac{1}{N^2}})$, we note that: 
\begin{enumerate}
    \item  In the bottom left figure, with $M=N$, we set $K$ to be a constant multiple of $N$, and see that the error demonstrates a first order decay, this is faster than the expected half order decay.  The main reason why a half order decay is not observed is that this is an example where the diffusion is not controlled. The $N$ term which is related to the variance of $Z_n$ is no longer present in the gradient ($j'(u) = b'_uY + f'_u $) and so it has much smaller variance. We also comment that the $\sim \mathcal{O}(\sqrt{\frac{1}{K}+ \frac{1}{N^2}})$ error decay is in fact more specifically $\sim \mathcal{O}(\sqrt{\frac{N}{M K}+ \frac{1}{N^2}})$ from the proof of Theorem \ref{standard_new}. And when we have uncontrolled diffusion, meaning $Z$ term is not present, the `$N$' in $\frac{N}{M K}$ disappears, leading to $\sim \mathcal{O}(\frac{1}{N^2})$ when $K=N$.
    
    \item In the bottom right figure, with $M=N$, we set $K$ to be a constant multiple of $N^2$, and see that even though under the same number of discretization, the relative error is smaller than the case when $K\sim \mathcal{O}(N)$ (bottom left figure), the error still demonstrates a first order decay. This is inline with the analysis for the discretization error.  
\end{enumerate}

\begin{figure}[h]
    \centering 
    \includegraphics[width=1.0\textwidth]{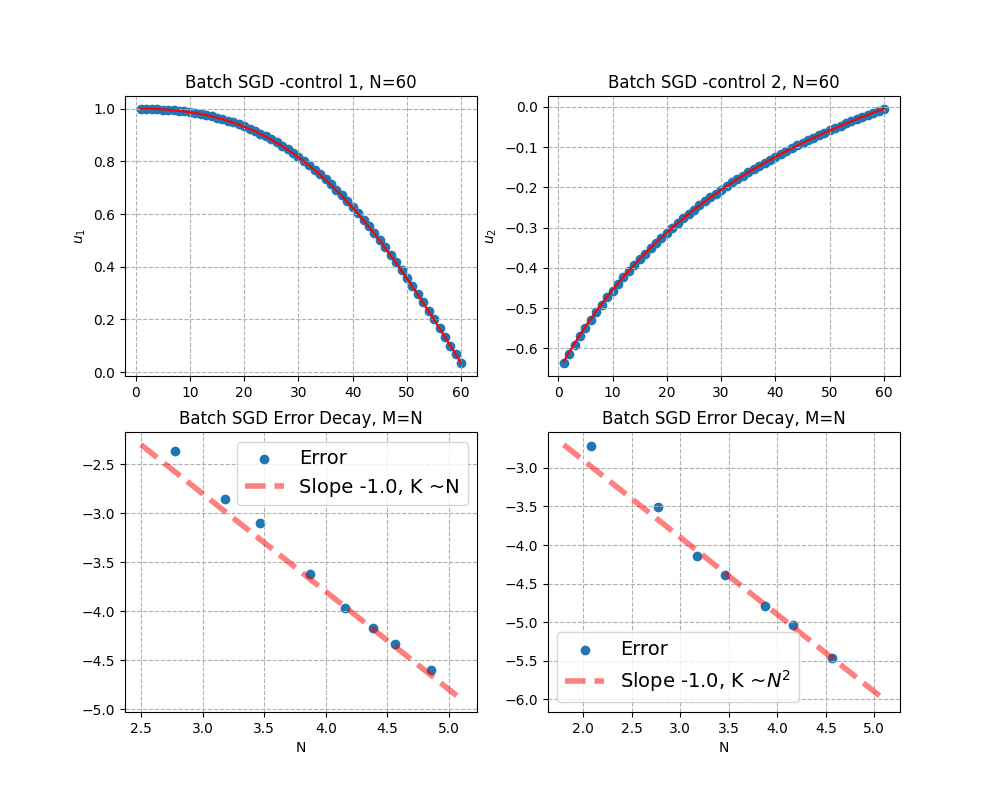} 
    \caption{Top two figures: Demonstration of numerical solutions using the Batch projection algorithm. Bottom two figures: demonstration of error decay by using different number of iterations for $K$. $x-$axis in log scale.}
    \label{fig:3dcompare} 
\end{figure}

\subsubsection{The damped contraction scheme.}
We now solve the problem using damped contraction algorithm with batch samples. In this example (Figure \ref{fig:4dcompare}), to obtain the numerical solutions for the top two figures, we take the total of temporal discretization to be $N=55$ and use batch size $M=N^2$. The total number of contraction steps is taken to be $800$. 

As in the case in Section 5.1.2, we pick $\eta \sim \mathcal{O}(1-\rho) $ to be a large constant, in this case $0.995$ and ensure that the convergence will take place when the batch size is large. More specifically, we let $N=\lfloor 1.0/(\eta^K) \rfloor$ where $K$ is the number of iteration, so that we are trying to roughly look at the order  $ \sim \mathcal{O}(\sqrt{\frac{1}{N} + \frac{N}{M} + \frac{1}{N}})$. We note that: 
\begin{enumerate}
    \item Since in this case the diffusion is not controlled, the $N$ term related to $Z_n$ is not present hence one should expect a rate of $ \sim \mathcal{O}(\sqrt{\frac{1}{N} + \frac{1}{M} + \frac{1}{N}})$ instead. Thus, when one pick $M=N$, half order decay is expected in terms of $N$. In the bottom left figure it is noted that since $N$ term is gone, convergence takes place when $M \sim \mathcal{O}(N)$. 
    \item In the bottom right figure, with $M=N^2$, we observe that the algorithm demonstrates a first order decay which is an improvement over case than when $M=N$. This is  faster than the half order decay we anticipated. 
\end{enumerate}

\begin{figure}[h]
    \centering 
    \includegraphics[width=1.0\textwidth]{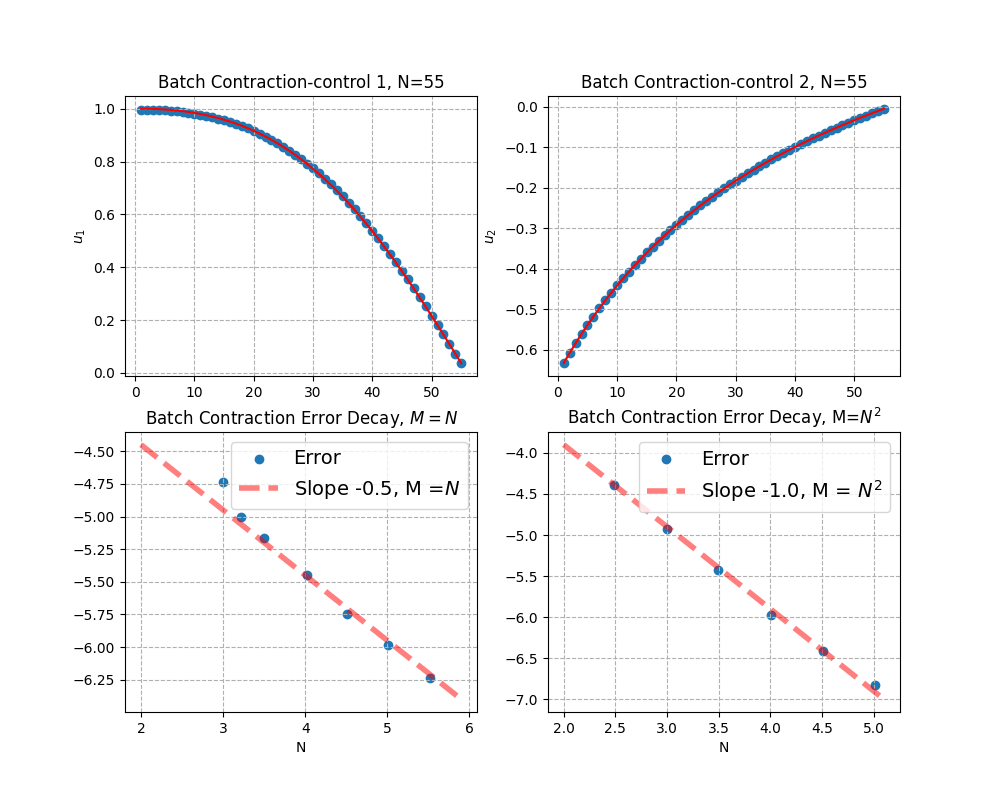} 
    \caption{Top two figures: Demonstration of numerical solutions using the damped contraction algorithm. Bottom two figures: demonstration of error decay by using different batch sizes.}
    \label{fig:4dcompare} 
\end{figure}

\subsubsection{Comparison between different algorithms}
In this section, we compare the time efficiency of the three algorithms for the uncontrolled diffusion example.  The result is summarized in Table \ref{eg2_table}.

Similar to the choice in section 5.1.3, for batch SGD, we pick the batch size according to our analysis results $M=N$, and to obtain the first order convergence, we pick $K \sim \mathcal{O}(N^2)$. For Damped contraction, we pick $M \sim \mathcal{O}(N^2)$ to ensure the error decay. We then experiment with iteration size $K$ so that the relative error will be on par with that of the first two algorithms. All relative errors are computed based on averages of 50 independent runs. The conclusion reached is similar to that from Section 5.1.3.

\begin{table}
\caption{Example 1 Controlled diffusion, efficiency comparison}	
\label{eg2_table}
\begin{center}
\begin{tabular}{ |c|c|c|c|c|c|} 
\hline
 Method & $M$ (Batch size) & $K$ (Iterations) &$N$ & Time (s) & Relative Error  \\
\hline
SGD &1 & $40^3$ &40  & 29.14s & 0.00429    \\
\hline
Batch SGD & $N$&$40^2$ & 40 &0.91 s & 0.00426 \\
\hline
Damped Contraction& $N^2$ & $620$ & 40 & 1.81s & 0.00420\\
\hline
\end{tabular}
\end{center}
\end{table}

\subsection{Solving high dimensional HJB using Batch projection algorithm}
In this section, we will use the Batch projection algorithm to solve a high dimension HJB equation used in the seminal paper \cite{Weinan2}. The equation of interest is as follows:
\begin{align}
    \begin{cases}
    \frac{\partial v}{\partial t} + \Delta_x v - \lambda |D_x v|^2 = 0 ,& (t,x) \in [0,T) \times \mathbb{R}^d \\
    v(T,x)=g(x),              & x \in \mathbb{R}^d
\end{cases}
\end{align}
which originates from the following HJB equation
\begin{align}
    \begin{cases}
    \frac{\partial v}{\partial t} + \Delta_x v + \lambda \inf_{a \in \mathbb{R}^d} [|a|^2 + 2a \cdot D_x v ] = 0 ,& (t,x) \in [0,T) \times \mathbb{R}^d \\
    v(T,x)=g(x),              & x \in \mathbb{R}^d.
\end{cases}
\end{align}
This equation is related to the following stochastic optimal control problem: 
\begin{align}
	v(t,x) = \inf_{\alpha \in \mathcal{A}} \mathbb{E}[\int^T_t |\alpha_s|^2 ds + g(X^{t,x,\alpha}_T)]
\end{align}
and $X_s=X_s^{t,x,\alpha}$ is the controlled process governed by 
\begin{align}
	dX_s=2 \sqrt{\lambda} \alpha_s ds + \sqrt{2}dW_s,  \ \ \ t \leq s \leq T, \ X_t=x. \label{diff_hjb}
\end{align}
Note that the optimal control of the problem is of a feedback form, i.e $\alpha_s=u_s(x)$ for some $u_s : \bR^d \rightarrow \bR^d$. Thus, to find the optimal control, after discretizing the time interval $[0,T]$ with uniform mesh of size $\frac{T}{N}$, we approximate $u_n(x)$ with a a randomized neural network: 
\begin{align}
    u_n(x)=A_n \tilde{\phi}(\tilde{A}_n x+\tilde{b}_n) +b_n 
\end{align}
where $x\in \bR^d, A_n \in \bR^{d_1 \times \tilde{d}}, \tilde{A}_n \in \bR^{\tilde{d} \times d}, b_n \in \bR^{d_1}$ with  $d_1 =d=100$. 
The SDE \eqref{diff_hjb} then finds the following discretization: 
\begin{align}\label{diff_hjb_disc}
    X_{n+1}=X_n+ 2 \sqrt{\lambda} u_n (X_n) \Delta t + \sqrt{2} \Delta W_n
\end{align}

We emphasize a few considerations of using the Randomized Neural network structure: 
\begin{enumerate}[i.)]
    \item We use the randomized neural network with only one hidden layer and the only trainable parameters are $(A_n, b_n)$ for $n=0,...,N-1$. 
    \item The fixed parameters $(\tilde{A}_n, \tilde{b}_n)$ are sampled independently from Gaussian distribution with zero mean and preselected variance. 
    \item Given this special structure, the gradient of the Hamiltonian with respect to both the input variable $x$ and the weight vectors $(A_n, b_n)$ can be computed manually. This avoids the full error backpropagation of the loss under the approach of \cite{Jiequn1}. Under our SMP framework, we see that the coefficient is actually linear in the control parameters (trainable weights).
\end{enumerate}

Now, the Hamiltonian is defined accordingly as 
\begin{align}
    H_n= 2 \sqrt{\lambda} Y^T_n u_n(x)+ \sqrt{2} tr(I Z_n ) + |u_n|^2
\end{align}
and $(Y_n, Z_n)$ are defined via the following system of equations: 
\begin{align}
\begin{cases}
        Y_{n} & = Y_{n+1}+ 2 \sqrt{\lambda} Y^T_{n+1} \Big( A_n \cdot \text{diag}(\tilde{\phi}_n')\cdot \tilde{A}_n + 2 u_n(x)^T \big( A_n \cdot \text{diag}\big(\tilde{\phi}_n'\big)\cdot \tilde{A}_n \big) \Big) \Delta t \nonumber \\ 
        Z_{n} & = \frac{Y_{n+1} \Delta W^T_n }{\Delta t}
\end{cases}
\end{align}
where we write $\tilde{\phi}_n'$ to represent $\tilde{\phi}':= \tilde{\phi}'(\tilde{A}_n x+\tilde{b}_n)$. 
The sample-wise gradient $j'(A_n,b_n)$ is then computed as follows: 
\begin{align}\label{grad_rand_nn}
    j'(A_n) &= 2 \sqrt{\lambda} Y_n \tilde{\phi}_n^T + 2(A_n \cdot \tilde{\phi}_n +b) \tilde{\phi}^T \nonumber \\
    j'(b_n) &= 2 \sqrt{\lambda} Y_n^T + (A_n \cdot \tilde{\phi}_n + b_n)^T  
\end{align}
where we have used the shorthand notation $\tilde{\phi}_n$ to denote $\tilde{\phi}(\tilde{A}_n x+\tilde{b}_n)$. With the explicit form of the gradient above, one can perform batch gradient decent accordingly (Algorithm \ref{algorithm_batch_sample}). 

\subsubsection{Numerical results for HJB}
In this section, we solve and compare the solution of the HJB equation at a single space-time location $(t_0, x_0)=(0, 0)$ obtained via the numerical methods discussed in this section. The exact solution has the following analytic expression and we find $v(0,0)$ via Monte Carlo simulation.  
\begin{align}
	v(t,x) =-\frac{1}{\lambda} \ln \Big(\mathbb{E}[\exp\big(-\lambda g(x+\sqrt{2}W_{T-t})\big)]\Big),  (t,x) \in [0,T] \times \mathbb{R}^d \label{exact_hj}
\end{align}
Since we are now incorporating SMP framework with the training of neural networks, we can also use alternative optimizers for updating the trainable parameters (controls) such as SGD/Adam/Adagrad. We summarize and compare the result among these three approaches in the figure below. For each optimizer, we use batch size equal to 1024 and train for total 380 epochs. The activation functions are all picked to be the hyperbolic tangent function ($tanh$) function. The test is implemented with 128 hidden neural nets with $\lambda=1, d=100$. 

The top figure in Figure \ref{fig:HJBcompare} shows one typical training instance for each update procedure. It is noted that the Adam and AdaGrad optimizers demonstrate faster convergence than SGD. After running each method independently for 20 test cases, we find the following percentage error: SGD-0.35\%, AdaGrad-0.06\%, Adam-0.035\% which shows that the Adam optimizer produces a more accurate result. 

Hence, to further test the robustness of the SMP training framework, we adopt the Adam optimizer while changing the $\lambda$'s in the equation. It is noted from the bottom figure from Figure \ref{fig:HJBcompare} that the error rate is overall acceptable: they never exceed $0.5\%$. We comment that the results are not finetuned, namely we only used constant learning rate $2\times 10^{-3}$ one hidden layer of dimension 128 for all cases. To achieve better accuracy, one is encouraged to change those variables and use learning rate scheduling.  
\begin{figure}[h]
    \centering 
    \includegraphics[width=0.8\textwidth]{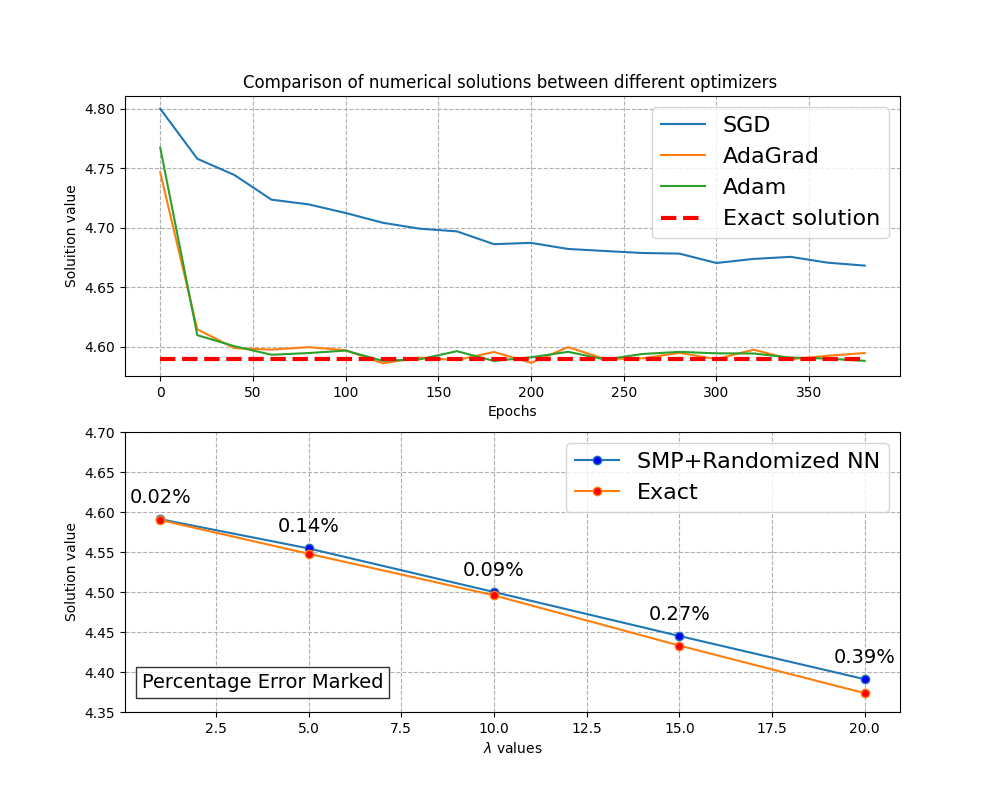} 
    \caption{Top figure: Comparison between numerical solutions obtained via different optimizers. Bottom figure: Comparison between numerical and exact solutions by using the Adam optimizer under different $\lambda$'s.}
    \label{fig:HJBcompare} 
\end{figure}

\section{Conclusion and future outlook}
In this work, we designed and updated the algorithm proposed in \cite{Hui1} and improved the convergence rate in \cite{Hui1} from $\mathcal{O}(\sqrt{\frac{N}{K}+\frac{1}{N}})$  to $\mathcal{O}(\sqrt{\frac{1}{K}+\frac{1}{N^2}})$ under the strong convexity assumption. Under some specific setup, we further designed a damped contraction algorithm that leverages both the sufficient and necessary condition of the Stochastic Maximum principle. Our experiments show the two algorithms under the batch sample setup demonstrate improvement in time efficiency over the original SGD approach. In particular, the batch SGD is shown to be more efficient. Our last example then demonstrates that the batch SGD algorithm can be used to solve practical problems of much higher dimensions. Also, when randomized neural networks are used to approximate the feedback control, the whole training process is more interpretable in the sense that the complicated error back propagation can be avoided, and the gradients can be computed algebraically. 

We further note that to better improve the efficiency/accuracy of the algorithm, the pathwise realization for $Z_t$ in the solution $(Y_t,Z_t)$ of the BSDE needs to be further designed for variance reduction. Also, we will further explore the possible extension of the designed contraction mapping based algorithm to the training of deep neural networks so that the entire neural network traning will be derivative free.

\section{Appendix}

Proof for \textbf{Proposition \ref{stab_x}}. 
\begin{proof}
	By definition, we have  
	\begin{align}
		\xuv_{n+1}&=\xuv_n + \Delta b^{u,v}_n h + \Delta \sigma^{u,v}_n \Delta W_n
	\end{align}
	Squaring both sides and take expectations, we obtain the following
\begin{align}
	\E[|\xuv_{n+1}|^2] &= \E[|\xuv_n + \Delta b^{u,v}_n h|^2]+h\E[|\Delta \sigma^{u,v}_n|^2]\\
	& \leq \E[(1+\epsilon)|\xuv_n|^2+ (1+\epsilon^{-1})Ch^2(|\xuv_n|^2+|u_n-v_n|^2)] \\
	&+ Ch|u_n-v_n|^2 \nonumber \\
	&\leq(1+\epsilon+(1+\epsilon^{-1})Ch^2) \E[|\xuv_n|^2]+\Big((1+\epsilon^{-1})Ch^2+Ch \Big)|u_n-v_n|^2)\\
	&\leq (1+Ch)\E[|\xuv_n|^2]+Ch|u_n-v_n|^2
\end{align}
where we have taken $\epsilon=h$. 
Then, by discrete Gronwall's inequality, we have 
\begin{equation}\label{xuv_estimate}
	\max_{0 \leq n \leq N} |\xuv_n|^2 \leq Ch \sum^{N-1}_{n=0}|u_n-v_n|^2
\end{equation}
\end{proof}

Proof for \textbf{Proposition \ref{stab_yz}}
\begin{proof}
Recall the definition of $Y^{N,u}_{t_{n+1}}$. By the Martingale representation theorem, there exists a square integrable process $\bar{\zu_t}$ such that 
\begin{equation}
	Y^{N,u}_{t_{n+1}}=\E_{t_n}[Y^{N,u}_{t_{n+1}}]+\int^{t_{n+1}}_{t_n} \bar{\zu_t}d W_t
\end{equation}
Hence, we have:
\begin{align}
	Z^{N,n}_{t_n}&=\frac{1}{h}\E_{t_n}[Y^{N,u}_{n+1}\Delta W_{n}] \label{z1}\\
	&=\frac{1}{h}\E_{t_n}[\int^{t_{n+1}}_{t_n} \bar{\zu}_td W_t \Delta W_{n+1}]\label{z2}\\
	&=\frac{1}{h}\E_{t_n}[\int^{t_{n+1}}_{t_n} \bar{\zu}_t dt]\label{z3}
\end{align}
Thus we have the following system of equations:
	\begin{equation}
		\begin{cases}
	Y^{N,u}_{t_{n}}=Y^{N,u}_{t_{n+1}}+h\E_{t_n}[b'_x(X^u_{n},u_{n})^T Y^{N,u}_{t_{n+1}}+f'_x(X^u_{n},u_{n})]-\int^{t_{n+1}}_{t_n} \bar{\zu_t}d W_t\\
	Y^{N,v}_{t_{n}}=Y^{N,v}_{t_{n+1}}+h\E_{t_n}[b'_x(X^v_{n},v_{n})^T Y^{N,v}_{t_{n+1}}+f'_x(X^v_{n},v_{n})]-\int^{t_{n+1}}_{t_n} \bar{\zv_t}d W_t
		\end{cases}
	\end{equation}
Take the difference between the two equations above we get 
\begin{equation}{\label{dfbsde_diff}}
	\yuv_{t_n}=\yuv_{t_{n+1}}+h\E_{t_n}[b_x(X^u_{n},u_{n})^T Y^{N,u}_{t_{n+1}}-b_x(X^v_{n},v_{n})^T Y^{N,v}_{t_{n+1}} +f'_x(X^u_{n},u_{n})-f'_x(X^v_{n},v_{n})]-\int^{t_{n+1}}_{t_n} (\bar{\zu_t}-\bar{\zv_t})d W_t
\end{equation}
Let $\iuv_{n+1}:=\E_n[b'_x(X^u_{n},u_{n})^T Y^{N,u}_{t_{n+1}}-b'_x(X^v_{n},v_{n})^T Y^{N,v}_{t_{n+1}}+f'_x(X^u_{n},u_{n})-f'_x(X^v_{n},v_{n})]$, then recall \eqref{z1}-\eqref{z3}, we have 
\begin{align}
\zu_{t_n}-\zv_{t_n}&=\frac{1}{h}\E_{t_n}[\int^{t_{n+1}}_{t_n} \bar{\zu_t}-\bar{\zv_t}d t] \\ 
\Rightarrow  |\zuv_n|^2&\leq \frac{1}{h} \E_{t_n}[\int^{t_{n+1}}_{t_n} |\bar{\zuv_t}|^2 dt ] \label{dz}
\end{align}
Now, by writing \eqref{dfbsde_diff} as 
\begin{equation}
	\yuv_n+\int^{t_{n+1}}_{t_n} (\bar{\zu_t}-\bar{\zv_t})d W_t=\yuv_{n+1}+h\E_n[f'_x(X^u_{n},u_{n})^T Y^{N,u}_{t_{n+1}}-f'_x(X^v_{n},v_{n})^T Y^{N,v}_{t_{n+1}}+f'_x(X^u_{n},u_{n})-f'_x(X^v_{n},v_{n})]
\end{equation}
squaring both sides, take expectation and use \eqref{dz}, we obtain the following inequality: 
\begin{align}
	\E[|\yuv_n|^2 + h|\zuv_n|^2] &\leq \E[|\yuv_{n+1}+ h \iuv_{n+1}|^2]\\
	&\leq (1+\frac{h}{\epsilon} )\E[|\yuv_{n+1}|^2]+(1+\frac{\epsilon}{h})h^2\E[|\iuv_{n+1}|^2]  \label{ed1}
\end{align}
Now we consider estimates for $|\iuv_{n+1}|^2$:
\begin{align}
	\iuv_{n+1}&=\E_{t_n}[b'_x(X^u_{n},u_{n})^T Y^{N,u}_{t_{n+1}}-b'_x(X^v_{n},v_{n})^T Y^{N,v}_{t_{n+1}}+f'_x(X^u_{n},u_{n})-f'_x(X^v_{n},v_{n})] \nonumber \\
	&=\E_{t_n}[ \Big(b'_x(X^u_{n},u_{n})-b'_x(X^v_{n},v_{n})\Big)^T Y^{N,u}_{t_{n+1}}-f_x(X^v_{n},v_{n})^T(Y^{N,v}_{t_{n+1}}-Y^{N,u}_{t_{n+1}}) \nonumber \\
	& +f'_x(X^u_{n},u_{n})-f'_x(X^v_{n},v_{n})] \nonumber \\
	& \leq \E_n[C(|\xuv_{n}|+|u_{n}-v_{n}|) Y^{N,u}_{n}]+\E_n[|f_x(X^v_{n},v_{n}) \yuv_{n+1}|]+\E_n[|u_{n}-v_{n}|+|\xuv_{n}|] \nonumber
\end{align}
use Cauchy's inequality on the first term, squaring both sides and take expectation, we get 
\begin{align}
	\E[\iuv_{n+1}|^2] &\leq C \E[|\xuv_{n}|^2 + |u_{n}-v_{n}|^2] \sup_n \E[|Y^{N,u}_{t_n}|^2]+ C\E[|\yuv_{n+1}|^2]+C\Big(\E[|\xuv_{n}|^2 + |u_{n}-v_{n}|^2] \Big) \nonumber \\
	&\leq C \Big( \E[|\xuv_{n+1}|^2 + |u_{n}-v_{n}|^2] \Big) + C\E[|\yuv_{n+1}|^2] \label{ed2}
\end{align}
where we use the assumption that $b'_x$ is uniformly bounded, so that both $\E[|b'_x(X^v_{n},v_{n})|^2]$ and $\sup_n \E[|Y^{N,u}_{n}|^2]$ are uniformly bounded by a constant $C$ independent of the control $u$. 
Combining \eqref{ed1} and \eqref{ed2}, we obtain the following inequality: 
\begin{align}
	\E[|\yuv_n|^2 + h|\zuv_n|^2] &\leq (1+\frac{h}{\epsilon}+(h+\epsilon)Ch) \E[|\yuv_{n+1}|^2]+h(h+\epsilon)C \E[|\xuv_{n}|^2 + |u_{n}-v_{n}|^2] \nonumber \\
	&\leq (1+Ch) \E[|\yuv_{n+1}|^2]+Ch\E[|\xuv_{n}|^2|+|u_{n}-v_{n}|^2|] \label{tosumup}
\end{align}
where we picked $\epsilon=1/C$ and used the Proposition \ref{stab_x}.  Thus, by the discrete Gronwall's inequality (backward), and the assumption that the terminal function is also Lipschitz, we have
\begin{align}
	\sup_{0 \leq n \leq N} \E[|\yuv_n|^2 ] &\leq Ch \sum^{N-1}_{n=0} \E[|\xuv_{n}|^2|+|u_{n}-v_{n}|^2] \nonumber \\
	& \leq Ch\sum^{N-1}_{n=0} |u_{n}-v_{n}|^2 \label{dy_estimate}
\end{align}
This finishes the estimate for $\sup_{0 \leq n \leq N} \E[|\yuv_n|^2 ]$. 

To obtain an estimate for $h\sum^{N-1}_{n=0} \E[|\zuv_n|^2]$, we sum up the inequality \eqref{tosumup} on both sides and use \eqref{dy_estimate}, we obtain. 
\begin{equation}{\label{dz_estimate}}
	h\sum^{N-1}_{n=0} \E[|\zuv_n|^2] \leq Ch\sum^{N-1}_{n=0} |u_{n}-v_{n}|^2
\end{equation}
Combining \eqref{dz_estimate} and \eqref{dy_estimate}, we have
\begin{equation}
	\sup_{0\leq n\leq N}\E[|\yuv_n|^2]+h\sum^{N-1}_{n=0} \E[|\zuv_n|^2]  \leq Ch\sum^{N-1}_{n=0} |u_{n}-v_{n}|^2
\end{equation}
\end{proof}

Proof for \textbf{Proposition \ref{continuous_YZbound}}.
\begin{proof}
 We take the It\^{o} derivative of the term $|Y^u_t-Y^v_t|^2$ which are defined as follows:
\begin{align}
   dY^u_t &= -(b'_x(u_t) Y_t^u +  f'_x(x^u_t)) dt + Z_t^u dW_t \nonumber \\ 
   dY^v_t &= -(b'_x(v_t) Y_t^v+ f'_x(x^v_t)) dt + Z_t^v dW_t 
\end{align}
The following results are attained:  
\begin{align}
d|\yuv_t|^2 &= 2Y^{u,v}_t dY^{u,v}_t + |\Delta  Z^{u,v}_t |^2 dt  
\end{align}
Integrating both sides and take expectation, the following equality is attained: 
\begin{align}
    &\E[|\yuv_t|^2+ \int^T_t |\zuv_s |^2 ds]  \leq |\yuv_T|^2+2 \E[\int^T_t \yuv_s \Big ((\Delta b^{u,v}_x)_s Y^u_s + (b^v_x)_s \yuv_s  + \Delta f^{u,v}_s \Big ) ds]  \nonumber\\ 
    & \leq \E[|\yuv_T|^2+ 2 \int^T_t |(\Delta b^{u,v}_x)_s| |\yuv_s Y^u_s| + |(b^v_x)_s| |\yuv_s|^2 + \Delta f^{u,v}_s \yuv_s ds ]\nonumber \\ 
    & \leq \E[|\yuv_T|^2]+ \E[ (\sup_{s}|Y^u_s|)^2] \int^T_t  (\Delta b^{u,v}_x)_s  ds + \E[\int^T_t |\yuv_s |^2 ds] +C \E[\int^T_t |\yuv_s |^2 ds \nonumber \\ 
    &+\E[\int^T_t |\Delta f^{u,v}_s|^2 ds]+ \E[\int^T_t |\Delta Y^{u,v}_s|^2 ds] \label{ineq_sup} \\ 
    & \leq \E[|\yuv_T|^2] + C \int^T_t |u_s-v_s|^2 ds + C \E[\int^T_t |\xuv_s|^2ds] +C\E[\int^T_t |\yuv_s |^2 ds
\end{align}
Thus, by using the (backward) Gronwall's inequality, we obtain: 
\begin{align}
    \sup_{t\in [0,T]} \E[|\yuv_t|^2] &\leq C \Big( \E[\int^T_0 |\xuv_s|^2ds] + \E[|\yuv_T|^2] \Big) \nonumber\\ 
    & \leq C \Big( T \sup_{t\in [0,T]}\E[|\xuv_t|^2] + \E[|\xuv_t|^2] + \int^T_0 |u_s-v_s|^2 ds \Big) \nonumber\\ 
    & \leq C_{YZ} ||u-v||^2_2
\end{align}
This naturally  also imply that 
\begin{align} \label{continuous_yuv_bound}
    \E[|\yuv_t|^2+ \int^T_t |\zuv_s |^2 ds] \leq C_{YZ} ||u-v||^2_2
\end{align}
for some $C_{YZ} >0$. 
\end{proof}

Proof for \textbf{Proposition \ref{contraction_stab_yz}}. 
\begin{proof}
    Consider the following systems of equations. 
    \begin{align}
        \begin{cases}
            H_t^{',u} := \E \big[ \alpha(X^u_t)Y^u_t + \beta_t Z^u_t + \gamma(X^u_t) \big] \nonumber\\
            H_t^{',v} := \E \big[ \alpha(X^v_t)Y^v_t + \beta_t Z^v_t + \gamma(X^v_t) \big]
        \end{cases}
    \end{align}
Take the difference between the two equations, square both sides and integrate from $0$ to $T$ to obtain: 
\begin{align}
    &\int^T_0 |\E \big[ \alpha(X^u_t)Y^u_t- \alpha(X^v_t)Y^v_t + \beta_t (Z^u_t-Z^v_t )+ \gamma(X^u_t)-\gamma(X^v_t) \big]|^2 dt  \nonumber\\
    &\leq \int^T_0 3|\E \big[  \alpha(X^u_t)Y^u_t- \alpha(X^v_t)Y^u_t+\alpha(X^v_t)Y^u_t - \alpha(X^v_t)Y^v_t  \big]|^2 + 3C \int^T_0 \E[|Z^u_t-Z^v_t|^2]  dt + 3 \int^T_0 \E[|\gamma(X^u_t)-\gamma(X^v_t)|^2]  dt \nonumber \\ 
    & \leq 6C \int^T_0 \E[|Y^u_t-Y^v_t|^2]dt + 6 \int^T_0 \E[|\alpha(X^u_t)-\alpha(X^v_t)|^2](\sup_{0\leq t \leq T} \E[|Y^u_t|^2])dt + C||u-v||^2 \nonumber\\ 
    & \leq C||u-v||^2
\end{align}
where we used the fact that $\alpha, \beta$ are both uniformly bounded, and $\alpha$ is Lipshitz in its argument. We also used Proposition \ref{continuous_Xbound} and Proposition \ref{continuous_YZbound}. 
\end{proof}

\textbf{High order scheme}

We list here the explicit form for the higher order scheme used for the simulation of forward SDE which is Method (4) from \cite{Weinan1} Section 7.5. We note that the scheme is of order 2: 
\begin{align}
    X_{n+1}&=X_n+b \Delta t + \sigma \Delta W_n+\frac{1}{2} \sigma \sigma'\big( (\Delta W_n)^2  -\Delta t \big) + \sigma b' \Delta Q_n + \frac{1}{2} \big( bb'+\frac{1}{2} \sigma^2 b'' \big) \Delta t^2 \nonumber\\
    &+\big( b\sigma' + \frac{1}{2}\sigma^2 \sigma'' \big)(\Delta W_n \Delta t - \Delta Q_n) + \frac{1}{2}\sigma(\sigma\sigma''+(\sigma)^2)(\frac{1}{3} \Delta W_n^2 -\Delta t)\Delta W_n
\end{align}
where $\Delta Q_n = \int^{t_{n+1}}_{t_n}\int^s_{t_n} dW_{\tau} ds$ is a Gaussian random variable satisfying $\E[\Delta Z_n]=0$, $\E[(\Delta Z_n)^2]=\frac{\Delta t^3}{3}$ and $\E[\Delta Z_n \Delta W_n]=\frac{\Delta t^2}{2}$.

\end{document}